\journal{Journal of Combinatorial Theory, Series B}
\newtheorem{theorem}{Theorem}
\newtheorem{lemma}[theorem]{Lemma}
\newtheorem{proposition}[theorem]{Proposition}
\newtheorem{corollary}[theorem]{Corollary}
\theoremstyle{definition}
\newtheorem{problem}{Problem}
\newtheorem{definition}{Definition}
\theoremstyle{remark}
\newcommand{\shm}{\,\triangledown\,}
\newcommand{\shtm}{\,\widetilde{\triangledown}\,}
\newcommand{\mshtm}{\,\mathchoice{\widetilde{\rotatebox[origin=c]{90}{\small$\trianglelefteqslant$}}}{\widetilde{\rotatebox[origin=c]{90}{\small$\trianglelefteqslant$}}}{\widetilde{\rotatebox[origin=c]{90}{$\scriptscriptstyle\trianglelefteqslant$}}}{\widetilde{\rotatebox[origin=c]{90}{$\scriptscriptstyle\trianglelefteqslant$}}}\,}
\newcommand{\rdens}[1]{\,{\nabla}_{#1}}
\newcommand{\trdens}[1]{\,\widetilde{\nabla}_{#1}}
\newcommand{\mtrdens}[1]{\,\widetilde{\nabla}\kern-2.8pt\raisebox{1.6pt}{$\scriptstyle /$}_{#1}}
\newcommand{\bbbn}{\mathbb{N}}
\newcommand{\arb}{{\rm Arb}}
\def\md{\Delta^{\rm -}}
 \def\vG{\vec{G}}
 \def\vH{\vec{H}}
\begin{document}
\begin{frontmatter}
\title{Colouring Edges with many Colours in Cycles}
\author{J.~Ne\v set\v ril\fnref{fn1}}
\address{Department of Applied Mathematics and Institute of Theoretical Computer Science (ITI)\\
  Charles University\\
  Malostransk\' e n\' am.25, 11800 Praha 1, Czech Republic}
 \ead{nesetril@kam.ms.mff.cuni.cz}
\fntext[fn1]{Supported by grant 1M0405 of the Czech Ministry of Education}
\author{P.~Ossona de Mendez%\corref{cor1}
\fnref{fn2}}
\address{
Centre d'Analyse et de Math\'ematiques Sociales (CNRS, UMR 8557)\\
  190-198 avenue de France, 75013 Paris, France}
 \ead{pom@ehess.fr}
%\cortext[cor1]{Corresponding author}
\fntext[fn2]{Partially supported by the Academia Sinica}
\author{X.~Zhu}
\address{Department of  Mathematics\\
Zhejiang Normal University, China}
\ead{xudingzhu@gmail.com}
%\fntext[fn3]{Research supported by NSC97-2115-M-110-008-MY3.}
%\subjclass{Primary 05C75;\\Secondary 05C15, 05C83, 05C85}

\begin{abstract}
The arboricity of a graph $G$ is the minimum number of colours needed to colour the edges of
$G$ so that every cycle gets at least two colours.
 Given  a positive
integer $p$, we define the generalized $p$-arboricity $\arb_p(G)$ of a
graph $G$ as the minimum number of colours needed to colour the
edges of a multigraph $G$ in such a way that every cycle $C$ gets
at least $\min(|C|,p+1)$ colours. In the particular case where $G$
has girth at least $p+1$, $\arb_p(G)$ is the minimum size of a
partition of the edge set of $G$ such that the union of any $p$
parts induce a forest. If we require further that the edge colouring be proper, i.e., adjacent edges
receive distinct colours, then the minimum number of  colours needed is the
generalized $p$-acyclic edge chromatic number of $G$. In this paper,  we relate the generalized $p$-acyclic edge chromatic
numbers and the generalized $p$-arboricities  of a graph $G$ to the
density of the multigraphs having a shallow subdivision as a subgraph of $G$.
\end{abstract}

\begin{keyword}
graph \sep colouring \sep arboricity
\MSC{05C15} {Colouring of graphs and hypergraphs}
\end{keyword}
\end{frontmatter}
\section{Introduction}
In this paper, we consider the following problem: given a graph
$G$, how many colours do we need to colour the edges of $G$ in such
a way that every cycle gets ``many'' colours?
 Of course, the answer to this question depends on the precise meaning of ``many''.
If we require that  each cycle $\gamma$ of length $l$ of $G$ gets $l$
colours, i.e.,  every cycle is rainbow, then the minimum number of
colours needed is equal to the maximum   size  of a block of $G$,
as two edges of $G$ belong to a common cycle if and only if they
belong to the same  block.
If we require that every cycle gets at least $2$ colours, i.e.,
every colour class induces a forest, then the minimum number of
colours needed is the {\em arboricity} $\arb(G)$ of $G$, and its
determination is solved by the well-known Nash-Williams' theorem we recall now.

Denote by $V(G)$ and $E(G)$ the vertex set and the edge set of
$G$. Also denote by $|G|=|V(G)|$ (resp. $\|G\|=|E(G)|$) the {\em
order} of $G$ (resp.  {\em size}). For $A\subseteq V(G)$ denote by
$G[A]$ the subgraph of $G$ induced by $A$. By Nash-Williams'
theorem \cite{arbor,NW64}, the arboricity of a graph $G$ is given by
the formula:
\begin{equation}
\arb(G)=\max_{A\subseteq V(G), |A|>1}\left\lceil\frac{\|G[A]\|}{|A|-1}\right\rceil.
\end{equation}

Here we consider a generalization of these two extreme cases.
A general form of our problem is captured by the following: 
\begin{quote}
Given an unbounded
non-decreasing  function $f:\bbbn\rightarrow\bbbn$ and an integer
$p$, what is the minimum number $N_f(G,p)$ of colours needed to
colour the edges of a graph $G$ in such a way that each cycle
$\gamma$  gets at least $\min(f(|\gamma|), p+1)$ colours?
\end{quote}
Thus for $p=1$ and $f(n)\geq 2$ we get $N_f(G,p)=\arb(G)$.
For an arbitrary graph $G$,
it is usually difficult to determine $N_f(G,p)$.
Our interest is to find upper bound for $N_f(G,p)$ in terms of other graph parameters,
and upper bound for $N_f(G,p)$ for some nice classes of graphs and/or for some nice special functions $f$.

Many colouring parameters are bounded for proper minor closed classes of graphs.
It is natural to ask for which functions $f$ is $N_f(G, p)$ bounded for any proper minor closed
class $\mathcal C$ of graphs. We shall prove (Lemma~\ref{lem:logtight}) that  if $f(2^{p-1})>p-1$ for some value of $p$ then
there is a (quite small) minor closed class of graphs $\mathcal C$,
such that $N_f(G,p)$ is unbounded. On the other hand, we prove (Corollary~\ref{cor:2}) that  if
$f(x) \le \lceil\log_2 x\rceil$ for all $x$ then $N_f(G,p)$ is not only
bounded on proper minor closed classes of graphs, but actually
bounded on a class $\mathcal C$ if and only if $\mathcal C$ has
{\em bounded expansion} (to be defined in Section~\ref{sec:3}).

Next we consider the special function $f(x)=x$. For this special function, the parameter $N_f(G,p+1)$ is
denoted as $\arb_p(G)$ and is called the {\em generalized
$p$-arboricity} of $G$.
So $\arb_p(G)$ is the number of colours needed if we require that each cycle of $G$ gets at least $p+1$ colours or is rainbow if its length is smaller than $p+1$.
Note that if $p=1$, then $\arb_p(G)$ is
the   arboricity  $\arb(G)$ of $G$.
We shall relate the generalized $p$-arboricities  of  a graph to the density of
its shallow topological minors. Toward this end we define the following notions, which are analogous to those defined in
\cite{POMNI} and \cite{ECM2009}. The main difference is that here we consider multigraphs.

Let $G$ be a multigraph and let $r$ be a half integer. A multigraph $H$ is a {\em shallow topological minor } of $G$ at depth $r$
if a $\leq 2r$-subdivision of $H$ is a subgraph of $G$. We denote by $G\mshtm r$ the class of the multigraphs which
 are shallow topological minors of $G$ at depth $r$.
Hence we have
$$G\in G\mshtm 0\subseteq G\mshtm \tfrac{1}{2}\subseteq\dots\subseteq G\mshtm r\subseteq\dots.$$
Notice that the class $G\mshtm 0$ is exactly the monotone closure of $G$, that is the class of all the subgraphs of $G$.

We denote by $\mtrdens{r}(G)$ the maximum density of a graph in $G\mshtm r$, that is:
\begin{equation}
\mtrdens{r}(G)=\max_{H\in G\mshtm r}\frac{\|H\|}{|H|}.
\end{equation}

In this paper, we will give 
lower and upper bounds for $\arb_p(G)$ based on $\mtrdens{\frac{p-1}{2}}(G))$.
For $p=1$, notice that it is an easy consequence of Nash-Williams' Theorem that $\lceil\mtrdens{0}(G)\rceil\leq \arb_1(G)\leq 2\lceil\mtrdens{0}(G)\rceil$. 
In this paper,
we shall show (Theorem~\ref{thm:ecolor}) that for any positive integer $p$,  there is a polynomial $P_p$ such that for any graph $G$,
\begin{equation}
\label{eq:arbp}
\left( \mtrdens{\frac{p-1}{2}}(G)\right)^{1/p} \leq  \arb_p(G)  \leq P_p(\mtrdens{\frac{p-1}{2}}(G)).
\end{equation}

The paper is organized as follows: In Section~\ref{sec:2}, we consider the key 
case of graphs with bounded tree-depth. In particular, we establish that 
if $f(2^{p-1})>p-1$ for some value of $p$ then
there is a minor closed class of graphs $\mathcal C$ (namely the class of graphs with tree-depth at most $p$) such that $N_f(G,p)$ is unbounded.
In Section~\ref{sec:3},
we prove that if, for some unbounded non-decreasing function $f$ and each fixed integer $p$, the value $N_f(G,p)$ is bounded for graphs in a class $\mathcal C$, then the class $\mathcal C$ has bounded expansion. We prove also that, conversely, if $\mathcal C$ has bounded expansion and $f_0(x)=\lceil\log_2 x\rceil$ then $\sup_{G\in\mathcal C}N_{f_0}(G,p)$ is bounded for each integer $p$. In Section~\ref{sec:4}, we establish~\eqref{eq:arbp}, which is the main result of this paper.
For the sake of improving the readability of this paper, the proofs of two difficult lemmas used in Section~\ref{sec:4} are actually postponed to Section~\ref{sec:5}.
In Section~\ref{sec:6} we consider a dual version of the problem.
\section{Longest Cycles and Tree-Depth}
\label{sec:2}
Let us recall some definitions.
The {\em height} of a rooted forest is the maximum number of
vertices in a path from a root to a leaf. The {\em closure} of a
forest $F$ is the graph on $V(F)$ in which $xy$ is an edge if and
only if $x$ is an ancestor of $y$ or $y$ is an ancestor of $x$.
 The {\em tree-depth}
${\rm td}(G)$ of a graph $G$ is the minimum height of a rooted
forest $F$ such that $G$ is a subgraph of its closure.

In this section we establish how the concept of tree-depth introduced in \cite{Taxi_tdepth}
is related to the length of the longest cycle of a graph.
It will follow  that if $f(2^{p-1})>p-1$ for some value of $p$ then
there is a minor closed class of graphs $\mathcal C$ on which $N_f(G,p)$ is unbounded.

\begin{lemma}
\label{lem:logtight} Let $p$ be an integer such that the function $f$ satisfies $f(2^{p-1})>p-1$. Let
$\mathcal C$ be the class of graphs with tree-depth at most $p$.
Then $N_f(G,p)$ is unbounded on $\mathcal C$.
\end{lemma}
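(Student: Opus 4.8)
The plan is to exhibit, for each bound $k$ on the number of colours, a graph $G_k$ of tree-depth at most $p$ together with a cycle on which no $k$-colouring of $E(G_k)$ can realize $\min(f(|\gamma|),p)$ distinct colours on every cycle $\gamma$; since $f(2^{p-1})>p-1$ means $f(2^{p-1})\ge p$, it suffices to produce, inside a tree-depth-$\le p$ graph, cycles of length at most $2^{p-1}$ on which any $k$-colouring uses at most $p-1$ colours. The natural candidate is an iterated construction of ``nested cycles'' whose skeleton is a path-like rooted forest of height $p$. Concretely, I would build $G_k$ recursively: $G_k^{(1)}$ is a long path (or a single long edge-set realized as a path of bounded length), and $G_k^{(i+1)}$ is obtained by taking many disjoint copies of $G_k^{(i)}$ and joining consecutive copies through a new common vertex, so that the closure of a height-$(i+1)$ rooted path contains $G_k^{(i+1)}$. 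One checks by induction that ${\rm td}(G_k^{(i)})\le i$, so ${\rm td}(G_k^{(p)})\le p$ and $G_k := G_k^{(p)}\in\mathcal C$.

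The core of the argument is a pigeonhole/Ramsey-type induction on $i$: I claim that if $E(G_k^{(i)})$ is coloured with $k$ colours, then some ``large'' substructure of $G_k^{(i)}$ — specifically a cycle of length at most $2^{i-1}$, together with a nested family of such cycles — is coloured with at most $i-1$ colours, provided the copies at each level are taken numerous enough as a function of $k$ and $i$. The base case $i=1$: a path coloured with $k$ colours has a monochromatic edge, i.e.\ trivially at most $0$ colours on a length-$1$ ``cycle'' if we allow digons; if the paper's multigraphs permit parallel edges one instead starts from a digon, using that a digon coloured monochromatically violates the requirement already when $f(2)\ge 2$, and more generally chases the recursion so that at level $i$ one has built a cycle of length $\le 2^{i-1}$ using only $i-1$ colours. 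The inductive step glues two nested monochromatic-ish substructures from level $i$ through the new apex vertex: the two old colour-sets have size $\le i-1$ each, and after passing to many copies one forces the two edges incident to the apex to repeat colours already used, so the merged cycle (of length $\le 2\cdot 2^{i-1}=2^{i}$, hence $\le 2^{(i+1)-1}$) sees at most $i$ colours. Choosing the branching factor at each level to be a sufficiently fast-growing Ramsey number $R(k,\dots)$ guarantees the pigeonhole always succeeds.

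At the top level $i=p$ this produces, in $G_k$, a cycle $\gamma$ with $|\gamma|\le 2^{p-1}$ coloured with at most $p-1$ colours; but the colouring is required to give $\gamma$ at least $\min(f(|\gamma|),p)$ colours, and since $f$ is non-decreasing and $|\gamma|\le 2^{p-1}$ cannot be too small — one arranges the construction so that $\gamma$ also has length $\ge 2^{p-1}$, or at least that $f(|\gamma|)\ge f(2^{p-1})$ by forcing $|\gamma|$ to be exactly controlled — we get $\min(f(|\gamma|),p)\ge\min(f(2^{p-1}),p)=p$, a contradiction. Since $k$ was arbitrary, $N_f(G,p)$ is unbounded on $\mathcal C$. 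The main obstacle I anticipate is making the nested-cycle bookkeeping precise: one must maintain simultaneously control of (a) the tree-depth, (b) the cycle lengths staying $\le 2^{i-1}$ while ultimately being large enough that $f(|\gamma|)\ge f(2^{p-1})$, and (c) the Ramsey-style pigeonhole that at each merge the apex edges are forced into the already-used palette. Getting all three to cohere — in particular ensuring the length lands on exactly $2^{p-1}$ rather than merely being bounded by it — is the delicate part; it may be cleaner to prove the slightly stronger statement that one can realize a cycle of length \emph{exactly} $2^{i-1}$ with $\le i-1$ colours by taking, at each level, two nested cycles of the same length and doubling.
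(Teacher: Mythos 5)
Your proposal and the paper take genuinely different routes, and yours has a gap that the paper's route is specifically designed to avoid. The paper does not build a graph recursively; it takes a single clean object, the closure $G$ of a complete $q$-ary tree $Y$ of height $p$ with branching factor $q=N^{\binom{p}{2}}+1$, and uses an \emph{encoding} trick: for each leaf $v$, the value $\phi(v)$ records the entire colouring of the $\binom{p}{2}$ edges of the clique induced by $v$ and its ancestors, and for internal vertices $\phi$ is propagated upward by majority vote. Pigeonhole on the $q>N^{\binom{p}{2}}$ children of each internal node shows the majority value is achieved at least twice, so one extracts a complete binary subtree $Y'$ of height $p$ all of whose leaf-to-root paths carry the \emph{identical} edge colouring. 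In the closure of such a $Y'$ there is a cycle of length $2^{p-1}$ whose edges fall into exactly $p-1$ ``level classes,'' and uniform colouring means each class is monochromatic — giving at most $p-1$ colours on a cycle of length $2^{p-1}$, hence $N_f(G,p)>N$. The crucial point is that the colour reuse is forced \emph{structurally} by the identical colouring of sibling root-paths, not by any constraint on a separately chosen apex edge.

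Your recursive gluing does not have such a mechanism. When you take many monochromatic-ish level-$i$ substructures and attach a new apex vertex, the two edges incident to the apex are coloured freely by the adversary. Pigeonhole over the copies can give you two copies with the \emph{same} interior colour-set and even the same pair of apex-edge colours, but nothing forces those apex-edge colours to lie inside the $\le i-1$ colours already on the path; the adversary can, and will, colour every apex edge with a colour outside every copy's palette. So your ``merged cycle'' gets up to $(i-1)+2$ colours, not $\le i$, and the invariant breaks. The paper avoids this entirely: in the $q$-ary tree closure, the ``apex edges'' of the cycle are just root-to-leaf edges, which are part of the encoded colouring and therefore \emph{already} controlled by the pigeonhole. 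Two further smaller issues: ``the closure of a height-$(i+1)$ rooted \emph{path}'' is $K_{i+1}$ and cannot contain a large graph — you mean a rooted \emph{tree} or \emph{forest}; and ``joining consecutive copies through a new common vertex,'' taken literally, introduces one new vertex per pair of consecutive copies and does not bound the tree-depth by $i+1$ (a single apex joined to all copies would). You flag the length-control issue yourself, but the real obstruction is the apex-colour one: without something like the paper's path-colouring encoding, there is no lever to force the glue edges into the existing palette.
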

\begin{proof}
Let $N$ be an arbitrarily large integer. Let $G$ be the closure of
a rooted complete $q$-ary tree $Y$ of  height $p$, where
$q=N^{\binom{p}{2}}+1$. Let $r$ be the root of $Y$. Given a leaf
$v$ of $Y$, there are $N^{\binom{p}{2}}$ ways to colour the edges
of the subgraph of $G$ induced by $v$ and its ancestors with $N$
 colours, and let $\phi(v)\in\{1,\dots,N^{\binom{p}{2}}\}$ be
the encoding of this colouration corresponding to the leaf vertex
$v$. For non leaf vertices $v$ we define $\phi(v)$ by induction on the
descending height as the majority value of $\phi(x)$ among the
children of $v$. The root $r$ has at least $\lceil
q/N^{\binom{p}{2}}\rceil=2$ sons $v$ with $\phi(v)=\phi(r)$.
Inductively, the root $r$ is the root of a complete binary subtree
$Y'$ of $Y$, all vertices of which have the same $\phi$-value as
$r$. The closure of $Y'$ contains a cycle $\gamma$ of length
$2^{p-1}$, and this cycle gets
%XD exactly
at most $p-1$ colours (see
Fig~\ref{fig:R3}). As $\min(f(|\gamma|),p)=p$ we conclude that
$N_f(G,p)>N$ and thus $N_f(G,p)$ is unbounded.
\end{proof}
\begin{figure}[h]
    \centering
        \includegraphics[width=0.70\textwidth]{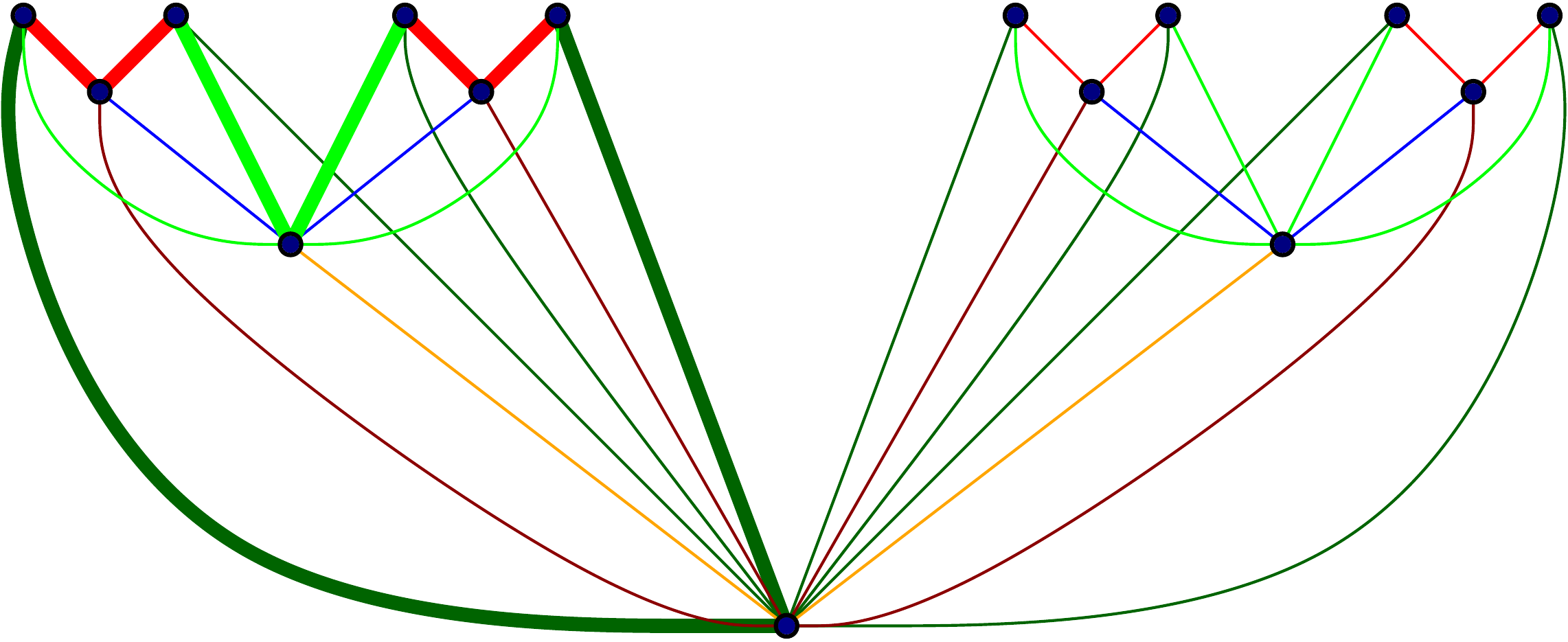}
    \label{fig:R3}
    \caption{The closure of $Y'$ contains a cycle $\gamma$ of length $2^{p-1}$, and this cycle gets exactly $p-1$ colours}
\end{figure}

Remark that the proof of Lemma~\ref{lem:logtight} is a variant of an old trick of R. Goldblatt. Lemma~\ref{lem:logtight} shows that we cannot  expect $N_f(G,p)$
to be bounded on proper minor closed classes of graphs if
$f(x)>\lceil\log_2 x\rceil$. In Section~\ref{sec:3}, we
shall show that if $f(x) \le \lceil\log_2 x\rceil$ then $N_f(G,p)$
are not only bounded on proper minor closed classes of graphs, but
actually bounded on a class $\mathcal C$ if and only if $\mathcal
C$ has bounded expansion. This provides yet another characterization of this robust notion.

%%%%%%%%%%%%%%%%%

We now prove that the connection with tree-depth shown in Lemma~\ref{lem:logtight} is actually deeper in the sense that a $2$-connected graph has no long cycles if and only if it has a small tree-depth. (Note that $2$-connectivity has to be assumed.)

\begin{lemma}
\label{lem:tdL}
Let $G$ be $2$-connected graph and let $L$ be the maximum length of a cycle in $G$. Then
$$1+\lceil\log_2 L\rceil\leq {\rm td}(G)\leq \binom{L-1}{2}+2.$$
\end{lemma}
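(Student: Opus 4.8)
The plan is to prove the two inequalities separately. For the lower bound $1+\lceil\log_2 L\rceil \le {\rm td}(G)$, I would exhibit a $2$-connected graph containing a cycle of length $L$ and argue that any rooted forest whose closure contains $G$ must have large height. Fix a cycle $\gamma$ of length $L$ in $G$ and a rooted forest $F$ (in fact a tree, since $G$ is connected) whose closure contains $G$. Since every edge of $\gamma$ joins an ancestor-descendant pair in $F$, the cycle $\gamma$ lies inside a single root-to-leaf branch's ``comb''; more precisely, I would take the vertex $v$ of $\gamma$ of minimum depth in $F$ and observe that $\gamma - v$ splits into arcs each of which, together with $v$, must be totally ordered by the ancestor relation — otherwise two incomparable vertices of $\gamma$ would be non-adjacent in the closure while the cycle forces a chord-free path between them. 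The standard recursion then gives that a path (or cycle) of length $L$ forces height at least $1+\lceil\log_2 L\rceil$: deleting the minimum-depth vertex of a path $P$ of length $\ell$ inside a rooted forest of height $h$ leaves two subpaths, one of length at least $\lceil (\ell-1)/2\rceil$, each sitting in a rooted forest of height at most $h-1$, so $h \ge 1 + \lceil\log_2\ell\rceil$ by induction. Applying this to $\gamma$ viewed as a path of length $L$ (delete one vertex first, or argue directly on the cycle) yields the claim; this is essentially the same counting as in the proof of Lemma~\ref{lem:logtight}.

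For the upper bound ${\rm td}(G) \le \binom{L-1}{2}+2$, I would build an appropriate rooted forest directly, using a DFS (depth-first search) tree of $G$. Recall the classical fact that in a DFS tree of a connected graph $G$, every edge of $G$ is a tree edge or a back edge, so $G$ is a subgraph of the closure of the DFS tree; hence ${\rm td}(G)$ is at most the height of any DFS tree. So it suffices to show that $G$ has a DFS tree of height at most $\binom{L-1}{2}+2$, equivalently that $G$ has no path of length exceeding $\binom{L-1}{2}+1$. Here is where $2$-connectivity and the bound on cycle length combine: I would argue that in a $2$-connected graph with no cycle longer than $L$, every path has bounded length. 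The key tool is an Erd\H{o}s–Gallai / Dirac-type statement — in a $2$-connected graph, a longest path $P$ can be ``rerouted'' using a second vertex-disjoint connection guaranteed by Menger's theorem to produce a cycle of length at least roughly $|P|$ (more carefully, a $2$-connected graph with a path of $m$ vertices contains a cycle of length at least $\min(m, \text{something})$, and in fact a longest path in a $2$-connected graph is contained in a cycle unless the graph is small). The quantitative bound $\binom{L-1}{2}+1$ on the path length is presumably obtained by a more careful ear-decomposition or block-tree argument: starting from a long cycle and repeatedly attaching ears, each ear of length $\ell_i$ is itself an $(\le L)$-bounded path, and the longest path one can assemble from ears whose absorption into cycles stays below length $L$ is bounded by a quadratic function of $L$; tracking the arithmetic gives the binomial coefficient.

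The main obstacle is the upper bound — specifically, proving the quantitative claim that a $2$-connected graph with circumference at most $L$ has no path longer than $\binom{L-1}{2}+1$. The lower bound and the DFS reduction are routine; the work is entirely in the extremal estimate relating longest-path length to circumference in the $2$-connected case. I expect this to require either an ear-decomposition argument with careful bookkeeping on how path-length accumulates relative to the cycle-length budget $L$, or an induction on the number of ears / blocks together with a Menger-based rerouting lemma of the form ``a path of length $\ell$ in a $2$-connected graph lies on a cycle of length at least $\ell - g(\ell, L)$ unless a long cycle already exists''. The constant $\binom{L-1}{2}$ strongly suggests summing $1 + 2 + \cdots + (L-2)$, i.e.\ a telescoping in which the $k$-th ear can contribute at most $L-1-k$ new vertices to a path before forcing a cycle of length $>L$, so the precise combinatorial lemma to isolate and prove is this per-ear contribution bound.
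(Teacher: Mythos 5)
Your lower-bound argument and your reduction of the upper bound to the DFS tree are both correct and match the paper (the lower bound is simply monotonicity of tree-depth plus the exact value ${\rm td}(C_n)=1+\lceil\log_2 n\rceil$; the upper bound does start from a DFS tree $Y$, whose closure contains $G$ because DFS trees have no cross edges, so ${\rm td}(G)\leq{\rm height}(Y)$). But the heart of the upper bound — the quantitative statement that turns $2$-connectivity and circumference $\leq L$ into the bound $h\leq\binom{L-1}{2}+2$ on the DFS height — is exactly where your proposal stops being a proof. You say so yourself (``The main obstacle is the upper bound'') and then offer two speculative directions (an Erd\H{o}s--Gallai / Menger rerouting, or a per-ear accounting in an ear decomposition) without carrying either one out. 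In particular, the reformulation ``$G$ has no path of length exceeding $\binom{L-1}{2}+1$'' is only sufficient, not equivalent, and you never establish it; the ear-decomposition bookkeeping that would give the claimed per-ear contribution is precisely the missing lemma.

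The paper's actual argument is more direct and worth knowing. Take a longest root-to-leaf path $P=(v_1=r,\dots,v_h)$ in $Y$ and use the standard DFS function ${\rm low}$. Because $G$ is $2$-connected, starting from $a_1=h$ one can greedily build a chain of back-edges $e_1,\dots,e_k$ whose endpoints $v_{a_i}$ and $v_{b_i}={\rm low}(v_{a_i})$ nest down $P$ until ${\rm low}$ reaches the root; the $2$-connectivity of $G$ (no cut vertex) is exactly what guarantees that each step can reach strictly below $b_i$ and that the process terminates with $b_k=1$. Let $\gamma_i$ be the fundamental cycle of $e_i$ and let $\gamma$ be the symmetric difference $\gamma_1\triangle\cdots\triangle\gamma_k$, which is again a cycle. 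Each edge of $P\cup\{e_1,\dots,e_k\}$ lies on exactly two of the cycles $\gamma_1,\dots,\gamma_k,\gamma$, so a double-count gives $2(h+k-1)\leq|\gamma_1|+\cdots+|\gamma_k|+|\gamma|\leq(k+1)L$, i.e.\ $h\leq\tfrac{k+1}{2}(L-2)+2$. Finally $\gamma$ itself has length $\geq k+2$, forcing $k\leq L-2$, and substituting gives $h\leq\binom{L-1}{2}+2$. This is the concrete lemma your sketch was groping for: rather than bounding longest paths in general, one bounds the DFS height directly by covering $P$ twice with at most $k+1\leq L-1$ cycles of length at most $L$ each, the key device being the symmetric-difference cycle $\gamma$ and the double-counting of edges.
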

\begin{proof}
The first inequality is a consequence of the monotonicity of tree-depth and the exact values of tree-depth for cycles: ${\rm td}(C_n)=1+\lceil\log_2 n \rceil$.

The remaining of the proof will concern the second inequality.

Consider a Depth-First Search tree $Y$ of $G$ and let $r$ be the root of $Y$.
Let $h={\rm height}(Y)$.
For a vertex $x$ let ${\rm level}(x)$ be the height of $x$ in $Y$.
The rooted tree $Y$ naturally defines a partial order $\preceq$ by $x\preceq y$ if $x$ belongs to the tree-path from $r$ to $y$.
A
%XD standard
basic property of DFS-trees is that two adjacent vertices are always comparable with respect to $\preceq$ (DFS-trees have no cross edges), so that $G\subseteq{\rm Clos}(Y)$ thus ${\rm td}(G)\leq h$.

For a vertex $x$, let ${\rm low}(x)$ be the smallest vertex $y$ (with respect to $\preceq$) which is adjacent
by an edge not in $Y$
%XD of
to a vertex $z\succeq x$. Notice
that such a vertex exists as $G$ is $2$-connected and that for $x\neq r$ it holds ${\rm low}(x)\prec x$.
Moreover, if ${\rm low}(x)\neq r$, the fact that ${\rm low}(x)$ is not a cut-vertex of $G$ implies that there exists $y$ such that ${\rm low}(y)\prec {\rm low}(x)\prec y\prec x$.

Let $P=(v_1=r,\dots,v_h)$ be a longest root-to-leaf path of $Y$.
We inductively define indexes $a_1,b_1,\dots,a_k,b_k\in\{1,\dots,h\}$ as follows: let $a_1=h$ and let $v_{b_1}={\rm low}(v_{a_1})$.
For $i\geq 1$, if $b_i\neq 1$ then let $v_{b_{i+1}}$ be the minimum ${\rm low}$ of a vertex $z$ such that $v_{b_i}\prec z\prec v_{a_i}$, let
$v_{a_{i+1}}$ be such that  $v_{b_i}\prec v_{a_{i+1}}\prec v_{a_i}$ and ${\rm low}(a_{i+1})=b_{i+1}$, and let $e_i$ be a non-tree edge linking some $w\succeq v_{a_i}$ and $v_{b_i}$.
This process stops at some value $k$ such that $b_k=1$. Notice that $a_{i+2}\preceq b_i$ for $1\leq i\leq k-2$.

Let $\gamma_i$ be the fundamental cycle of $e_i$ and let $\gamma$ be the symmetric difference of all the $\gamma_i$'s. Then  $\gamma$ is a cycle and each edge of $P\cup\{e_1,\dots,e_k\}$ belongs to $2$ of the cycles $\gamma_1,\dots,\gamma_k,\gamma$. Hence
$2(h+k-1)\leq |\gamma_1|+\dots+|\gamma_k|+|\gamma|\leq (k+1)L$, i.e. $h\leq \frac{k+1}{2}(L-2)+2$ (see Fig.~\ref{fig:tdL}).

Moreover, $\gamma$ contains at least two tree edges (either $k=1$ and $\gamma=\gamma_1$ or $k>1$ and then $a_2<a_1$ and $b_k<b_{k-1}$) thus $L\geq |\gamma|\geq k+2$ hence $k\leq L-2$.
Altogether, we obtain $h\leq \binom{L-1}{2}+2$.
\end{proof}

\begin{figure}[h!t]
	\centering
		\includegraphics[width=0.25\textheight]{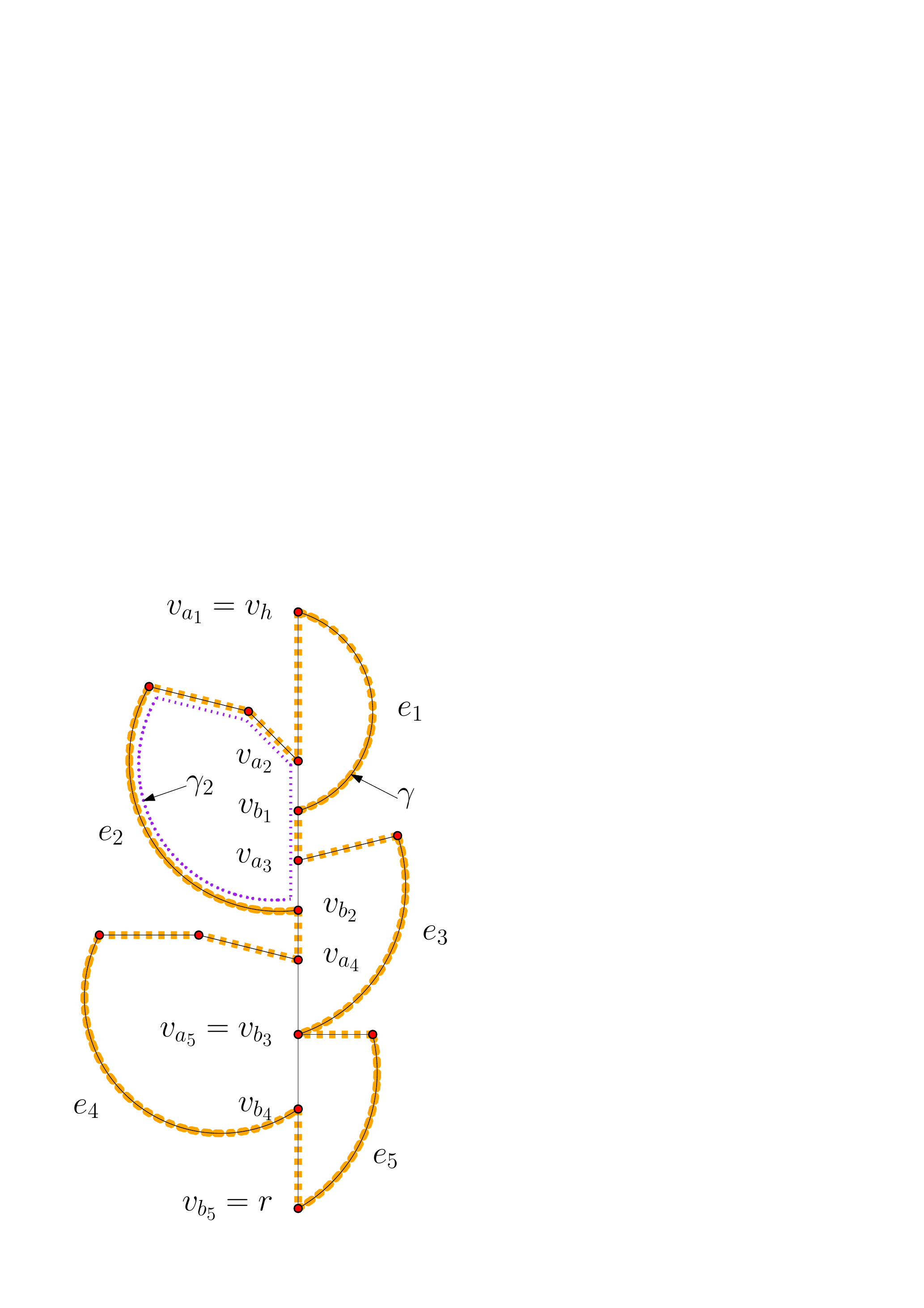}
	\caption{Illustration of the proof of Lemma~\ref{lem:tdL}. Cotree edge $e_i$ links $v_{a_i}$ (or one of its %descendant) to $v_{b_i}={\rm low}(v_{a_i})$, thus closing the cycle $\gamma_i$. The cycle $\gamma$ is the
 symmetric difference of the $\gamma_i$'s}
	\label{fig:tdL}
\end{figure}

It is not clear whether the quadratic bound of Lemma~\ref{lem:tdL} is tight. We propose the following problem (compare with \cite{Kenkre2007} where the chromatic number of a graph is bounded using the length of the largest odd cycle).
\begin{problem}
For a $2$-connected graph $G$, denote by $L(G)$ the length of the longest cycle in $G$.

Does there exist a constant $C$ such that for every $2$-connected graph the following inequality holds:
$${\rm td}(G)\leq C\, L(G)?$$
\end{problem}

\section{Classes of graphs with bounded expansion}
\label{sec:3}

Classes with bounded expansion have been introduced in \cite{ICGT05,Taxi_stoc06,POMNI} and are based on the boundedness of graph invariants similar to $\mtrdens{r}(G)$.
%Actually, the maximum density $\mtrdens{r}(G)$ of shallow topological minors of a graph $G$ is a variation of two parameters introduced earlier in \cite{POMNI} and \cite{ECM2009}.
%One is called the  {\em greatest reduced average density} (grad) of $G$ introduced in \cite{POMNI}, which is the maximum density of a simple graph $H$ which is a minor of $G$ at depth $r$ (i.e., $H$
%is obtained from a subgraph of $G$ by contracting a family of vertex disjoint rooted trees of
%height at most $r$). The other is the maximum density $\trdens{r}(G)$ of simple shallow topological minors of $G$ introduced in  \cite{ECM2009}, which is the maximum density of a simple graph $H$ which is a topological minor of $G$ at depth $r$.

We denote \cite{POMNI,ECM2009} by $G\shm r$ (resp. $G\shtm r$) the class of the simple graphs which
 are shallow minors (resp.
 %XD added simple here to differentiate it from the one with multiple edges, however, maybe this is different from the original name.
 simple
 shallow topological minors) of $G$ at depth $r$,
and we denote by $\rdens{r}(G)$ (resp. $\trdens{r}(G)$) the maximum density of a graph in
$G\shm r$ (resp. in $G\shtm r$), that is:
\begin{equation}
\rdens{r}(G)=\max_{H\in G\shm r}\frac{\|H\|}{|H|}\qquad
\trdens{r}(G)=\max_{H\in G\shtm r}\frac{\|H\|}{|H|}.
\end{equation}
Notice that the main difference between the definition of $\mtrdens{r}(G)$ and the one of $\trdens{r}(G)$ stands in the way parallel edges are handled.

A class $\mathcal C$ has {\em bounded expansion} if $\sup_{G\in\mathcal C}\rdens{r}(G)$ is bounded for each value of $r$.
It is obvious that $\rdens{r}(G)\geq\trdens{r}(G)$.
However, it has been proved by Dvo{\v r}{\'a}k \cite{Dvo2007,Dvov2007} that
for each integer $r$, $\rdens{r}(G)$ is bounded by a polynomial function of $\trdens{r}(G)$. Hence a class $\mathcal C$ has bounded expansion if and only if $\sup_{G\in\mathcal C}\trdens{r}(G)$ is bounded .

\begin{theorem}
Let $f:\bbbn\rightarrow\bbbn$ be an unbounded non-decreasing function with $f(x)\leq x$ and let $g:\bbbn\rightarrow\bbbn$ be defined by
$$g(p)=\max\{i, f(i)\leq p\}.$$

Then for every graph $G$ and every integer $r$ we have:
$$
\rdens{r}(G)\leq N_f(G,2r+1)^{2r+1}g(2r+1)^2.
$$
\end{theorem}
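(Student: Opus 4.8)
The plan is the following. Fix an optimal colouring $c\colon E(G)\to\{1,\dots,N\}$ realising $N:=N_f(G,2r+1)$, and write $g:=g(2r+1)$, which is a well-defined finite integer since $f$ is unbounded, non-decreasing and $f(1)\le 1$. The point of departure is the observation that \emph{any cycle of $G$ using at most $2r+1$ colours has length at most $g$}: a cycle $\gamma$ getting at most $2r+1<2r+2$ colours satisfies $\min(f(|\gamma|),2r+2)\le 2r+1$, so $f(|\gamma|)\le 2r+1$, whence $|\gamma|\le g$ because $f$ is non-decreasing. Hence, for every colour set $T$ with $|T|\le 2r+1$, the subgraph $G_T\subseteq G$ spanned by the edges coloured from $T$ has no cycle longer than $g$; and since the length of a longest cycle does not increase under edge deletion or edge contraction — for a contraction $G/xy$, a longest cycle of $G/xy$ either avoids the contracted vertex, and is already a cycle of $G$, or lifts to an at least as long cycle of $G$ — \emph{every minor} of $G_T$ also has no cycle longer than $g$.

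Next I would bound the edge density of graphs with no cycle longer than $g$, uniformly over their minors. By Lemma~\ref{lem:tdL}, every $2$-connected subgraph of such a graph has tree-depth at most $\binom{g-1}{2}+2=:t$, and a graph all of whose blocks have tree-depth at most $t$ has fewer than $2t\le g^2$ edges per vertex (since the closure of a rooted forest of height $t$ has fewer than $t$ edges per vertex, while the union of spanning trees of the blocks of a graph is a spanning forest). Applying this to every minor of $G_T$, which also has no cycle longer than $g$, we get $\rdens{r}(G_T)\le g^2$ for every colour set $T$ of size at most $2r+1$. (The Erd\H{o}s--Gallai circumference theorem gives here the sharper bound $g/2$, and the degenerate cases $g\le 2$ are anyway forests.)

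Now fix an arbitrary $H\in G\shm r$, witnessed by pairwise disjoint connected branch sets $\{B_v\}_{v\in V(H)}$ of radius $\le r$ — each with a chosen centre $c_v\in B_v$ and a rooted spanning tree $T_v$ of depth $\le r$ — together with a choice, for each $e=uv\in E(H)$, of an edge $f_e$ of $G$ between $B_u$ and $B_v$. For $e=uv$ let $\pi_u(e)$ and $\pi_v(e)$ be the paths in $T_u,T_v$ from $c_u,c_v$ to the ends of $f_e$, and let $C_e$ be the set of colours appearing on $\pi_u(e)\cup\{f_e\}\cup\pi_v(e)$; then $\emptyset\neq C_e$ and $|C_e|\le r+1+r=2r+1$. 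Classify the edges of $H$ by $C_e$: for a colour set $T$ with $|T|\le 2r+1$ let $H_T$ be the subgraph of $H$ formed by the edges $e$ with $C_e\subseteq T$. Then $H_T$ is a minor of $G_T$: the sets $B_v^T:=\bigcup\{V(\pi_v(e)):e\in E(H_T),\ v\in e\}$ are pairwise disjoint connected subtrees of the $T_v$'s using only colours of $T$, each $f_e$ with $e\in E(H_T)$ joins $B_u^T$ to $B_v^T$ and is coloured from $T$, and contracting the $B_v^T$ in $G_T$ exhibits $H_T$ as a subgraph of the result. Consequently $\|H_T\|\le\rdens{r}(G_T)\cdot|H_T|\le g^2|H|$. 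Since each edge of $H$ belongs to $E(H_{C_e})$ and there are at most $\sum_{j=1}^{2r+1}\binom{N}{j}\le N^{2r+1}$ non-empty colour sets of size at most $2r+1$, summing $\|H_T\|$ over the colour sets that occur gives $\|H\|\le N^{2r+1}g^2\,|H|$. As $H\in G\shm r$ was arbitrary, $\rdens{r}(G)\le N^{2r+1}g^2=N_f(G,2r+1)^{2r+1}\,g(2r+1)^2$.

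The routine verifications are the monotonicity of the circumference under contraction and the estimate $\sum_{j\le 2r+1}\binom{N}{j}\le N^{2r+1}$. The step that needs genuine care — and which I expect to be the main obstacle — is the choice of the invariant $C_e$: it is crucial that it record the colours of the \emph{whole} tree-paths $\pi_u(e),\pi_v(e)$, not merely of the connecting edge $f_e$, for that is exactly what forces each class $E(H_T)$ to be the edge set of a minor of the low-circumference graph $G_T$; and the bound $|C_e|\le 2r+1$, produced by the depth-$r$ branch sets, is precisely what matches the index $2r+1$ in $N_f(G,2r+1)$. The other point worth spelling out is the implication ``no long cycle $\Rightarrow$ bounded edge density of all minors'', where Lemma~\ref{lem:tdL} (applied to the blocks) enters and where the factor $g(2r+1)^2$ in the statement comes from.
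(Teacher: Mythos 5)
Your proof is correct and takes essentially the same approach as the paper: fix an optimal colouring, note that a cycle receiving at most $2r+1$ colours has length at most $g(2r+1)$ so each colour-restricted subgraph $G_T$ has blocks of tree-depth $O(g^2)$ by Lemma~\ref{lem:tdL}, take a witness $K\in G\shm r$ with branch trees of height $\le r$, assign to each edge of $K$ the $\le(2r+1)$-element colour set of its connecting path, and bound the density of each colour class (a shallow minor of the corresponding $G_T$) before summing over the at most $N^{2r+1}$ colour sets. The only cosmetic difference is that the paper pads each colour set $A_{i,j}$ to exactly $2r+1$ elements so as to partition $E(K)$ into $\binom{N}{2r+1}$ classes, whereas you sum over all nonempty subsets of size $\le 2r+1$; both give the stated bound.
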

\begin{proof}
Let $N=N_f(G,2r+1)$ and let $c:E(G)\rightarrow [N]$ be a colouring of the edges of $G$ such that each cycle $\gamma$ gets at least $\min(f(|\gamma|), 2r+2)$ colours.
For a subset $I\in\binom{[N]}{2r+1}$ of $2r+1$ colours, let $G_I$ be the subgraph of $G$ whose edges are coloured by colours  in $I$.
Then the maximum length of a cycle of $G_I$ is $g(2r+1)$. According to Lemma~\ref{lem:tdL}, blocks of $G_I$ have tree-depth at most $\binom{g(2r+1)-1}{2}+2$.

Let $K\in G\shm r$ be such that $\|K\|/|K|=\rdens{r}(G)$, let $x_1,\dotsc,x_k$ be the
roots of trees $T_1,\dots,T_k$ of height at most $r$ (corresponding to vertices $h_1,\dotsc,h_k$ of $K$)
and $H\subseteq G[V(T_1)\cup\dots\cup V(T_k)]$ be such that $K\cong H/(E(T_1)\cup\dots\cup E(T_k))$.

If $h_i$ and $h_j$ are adjacent in $K$ then there exists in $H$ a path $P_{i,j}$ of length at most $2r+1$ linking $x_i$ and
 $x_j$. Denote by $A_{i,j}$
 a subset of $2r+1$ colours such that all the edges of $P_{i,j}$ have their colour in $A_{i,j}$.
For $I\in\binom{[N]}{2r+1}$, let $H_I$ be subgraph of $H$ containing the edges of the paths $P_{i,j}$ for which $A_{i,j}=I$.
Let $K_I$ be the corresponding subgraph of $K$. As the blocks of $H_I$ are included
in  blocks of $G_I$, they have tree-depth at most
$\binom{g(2r+1)-1}{2}+2$. As tree-depth is minor monotone, this bound also applies to the blocks of $K_I$.
Observe that a graph of tree depth $k$ has density at most $k-1$, because if all edges are oriented from higher level end vertex to lower level end vertex, then each vertex has out-degree at most $k-1$.
It follows that
$\|K_I\|/|K|\leq \|K_I\|/|K_I|\leq \binom{g(2r+1)-1}{2}+1$. By summing up over the possible choices of $I$ we obtain
$$\|K\|/|K|\leq\binom{N}{2r+1}\left(\binom{g(2r+1)-1}{2}+1\right)\leq N_f(G,2r+1)^{2r+1}g(2r+1)^2.$$
\end{proof}

\begin{corollary}
\label{cor:1}
Let $\mathcal C$ be a class of graph and let $f:\bbbn\rightarrow\bbbn$ be an unbounded non-decreasing function with $f(x)\leq x$.

If $\sup_{G\in\mathcal C}N_f(G,r)<\infty$ for every $r\in\bbbn$ then $\mathcal C$ has bounded expansion.
\end{corollary}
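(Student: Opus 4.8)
The plan is to read off Corollary~\ref{cor:1} directly from the Theorem above, so that what remains to be done is merely logical bookkeeping together with one elementary observation about $g$.

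First I would verify that $g$ is finite-valued. Since $f$ is non-decreasing and unbounded we have $f(i)\to\infty$, so for every fixed $p$ the set $\{i : f(i)\le p\}$ is bounded above; it is nonempty because $f(1)\le 1\le p$. Hence $g(p)=\max\{i : f(i)\le p\}$ is a well-defined natural number. This is the only place where the unboundedness of $f$ enters.

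Now fix an integer $r$. Applying the Theorem above to an arbitrary $G\in\mathcal C$ gives
\[
\rdens{r}(G)\le N_f(G,2r+1)^{2r+1}\,g(2r+1)^2 .
\]
Since $2r+1\in\bbbn$, the hypothesis of the corollary provides $M_r:=\sup_{G\in\mathcal C}N_f(G,2r+1)<\infty$, and therefore
\[
\sup_{G\in\mathcal C}\rdens{r}(G)\le M_r^{\,2r+1}\,g(2r+1)^2<\infty .
\]
As $r$ was arbitrary, $\sup_{G\in\mathcal C}\rdens{r}(G)$ is bounded for every $r$, which is precisely the definition of $\mathcal C$ having bounded expansion.

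I do not expect any genuine obstacle here: all the substance is contained in the Theorem above (and, behind it, in Lemma~\ref{lem:tdL} via the tree-depth bound on the blocks of the colour classes). The only points requiring care are choosing the right instance $2r+1$ of the hypothesis and noting that $g$ takes only finite values, so that the displayed upper bound is an actual finite number.
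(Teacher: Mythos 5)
Your proposal is correct and matches the paper's intent: the corollary is stated without explicit proof precisely because it follows immediately from the preceding theorem by the bookkeeping you describe. Your check that $g(2r+1)$ is finite (using unboundedness of $f$ for the upper bound and $f(x)\le x$ for nonemptiness) is the only detail worth spelling out, and you handled it correctly.
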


\begin{theorem}
\label{thm:1}
Let $f_0(x)=\lceil\log_2 x\rceil$ and let $r$ be an integer. There exists a polynomial $P_r$ such that for every graph $G$ it holds
$$N_{f_0}(G,r)\leq P_r(\trdens{2^r}(G)).$$
\end{theorem}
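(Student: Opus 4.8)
The plan is to reduce the statement to the bounded tree-depth case via a low tree-depth (equivalently, $(r+1)$-centered) colouring of the \emph{vertices} of $G$, and then to observe that, once such a vertex colouring is fixed, it suffices to colour each edge by the unordered pair of colours of its two endpoints --- no further colouring inside the bounded tree-depth pieces is required.

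\textbf{Step 1.} First I would invoke the standard low tree-depth colouring theorem for classes of bounded expansion (see \cite{POMNI,Taxi_stoc06}), combined with Dvo\v r\'ak's theorem \cite{Dvo2007,Dvov2007} already quoted above: there is a polynomial $Q_r$ such that $G$ admits a vertex colouring $c\colon V(G)\to[N]$ with $N\le Q_r(\trdens{2^r}(G))$ for which, for every $j\le r+1$, the subgraph of $G$ induced by any $j$ of the colour classes has tree-depth at most $j$. (The depth $2^r$ in the argument of $Q_r$ is precisely the grad-depth that controls, polynomially, the number of colours of an $(r+1)$-centered colouring, and $\nabla_d(G)\le\nabla_{2^r}(G)$ for $d\le 2^r$ is polynomially bounded in $\trdens{2^r}(G)$ by Dvo\v r\'ak's theorem.) Taking $j=1$ shows that each colour class is independent, so $c$ is proper.

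\textbf{Step 2.} Colour each edge $uv$ of $G$ by the pair $\{c(u),c(v)\}$; since $c(u)\neq c(v)$ this is well defined and uses at most $\binom{N}{2}$ colours. Let $\gamma$ be a cycle of $G$, of length $L$, and let $q$ be the number of distinct colours that $c$ assigns to the vertices of $\gamma$. Walking once around $\gamma$ and recording the successive pairs of endpoint-colours yields a connected graph on the set of these $q$ colours whose edges are exactly the colours received by $\gamma$; a connected graph on $q$ vertices has at least $q-1$ edges, so $\gamma$ receives at least $q-1$ colours. If $q\ge r+2$, then $\gamma$ already receives at least $r+1\ge\min(\lceil\log_2 L\rceil,r+1)$ colours. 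If $q\le r+1$, then $\gamma$ is a subgraph of the graph induced by the at most $r+1$ colour classes it meets, which by Step 1 has tree-depth at most $q$; since tree-depth is minor-monotone and ${\rm td}(C_L)=1+\lceil\log_2 L\rceil$ (both used in the proof of Lemma~\ref{lem:tdL}), we get $1+\lceil\log_2 L\rceil\le q$, hence $\min(\lceil\log_2 L\rceil,r+1)=\lceil\log_2 L\rceil\le q-1$, and $\gamma$ again receives at least $q-1$ colours. In every case $\gamma$ receives at least $\min(f_0(L),r+1)$ colours, so $N_{f_0}(G,r)\le\binom{N}{2}$, and $P_r(x):=\binom{Q_r(x)}{2}$ works.

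The only genuinely delicate point is the bookkeeping in Step 1: one must check that an $(r+1)$-centered colouring can be obtained with a number of colours polynomial specifically in $\trdens{2^r}(G)$. This is where the exponential $2^r$ appears, and it rests on the known polynomial bound on the number of colours of a $p$-centered colouring in terms of the grads of depth $O(2^p)$ and on Dvo\v r\'ak's polynomial comparison between $\nabla_d$ and $\trdens{d}$. Everything after Step 1 is elementary, needing only the value ${\rm td}(C_L)=1+\lceil\log_2 L\rceil$ and the fact that a connected graph on $q$ vertices has at least $q-1$ edges.
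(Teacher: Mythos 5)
Your proof is correct and takes essentially the same route as the paper's: invoke the low tree-depth vertex colouring from \cite{Zhu2008} with parameter $p=r+1$, colour each edge by the unordered pair of its endpoints' colours, and use ${\rm td}(C_L)=1+\lceil\log_2 L\rceil$ to bound cycle lengths. The only difference is cosmetic --- you argue directly via the ``connected graph on $q$ colour-vertices has at least $q-1$ edges'' observation, while the paper phrases the same inequality contrapositively (a cycle with $i\le r$ edge colours meets at most $i+1$ vertex colours, hence has tree-depth at most $i+1$ and length at most $2^i$).
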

\begin{proof}
According to \cite{Zhu2008} there exists for each $p\in\bbbn$ a polynomial $Q_p$ such that every graph $G$ has a vertex colouring $c:V(G)\rightarrow[Q_p(\trdens{2^{p-1}}(G))]$ such that the subgraph induced by any $i\leq p$ colours has tree-depth at most $i$. Let $p=r+1$ and let $P_r(X)=\binom{Q_{r+1}(X)}{2}$.
The graph $G$ admits a vertex-colouring $c:V(G)\rightarrow [Q_{r+1}(\trdens{2^{r}}(G))]$ such that the subgraph induced by any $i\leq r+1$ colours has tree-depth at most $i$.
Colour each edge $\{x,y\}$ of $G$ by the set $\{c(x),c(y)\}$ (hence using $P_{r}(\trdens{2^{r}}(G))$ colours). Let $\gamma$ be a cycle of $G$ with $i\leq r$ colours. By construction, the vertices of $\gamma$ use at most $i+1\leq r+1$ colours hence ${\rm td}(\gamma)\leq i+1$, thus $|\gamma|\leq 2^i$. It follows that every cycle $\gamma$ of $G$ gets at least
$\min(r+1,f_0(|\gamma|)$ colours.
\end{proof}

\begin{corollary}
\label{cor:2}
Let $\mathcal C$ be a class of graphs. Then the following are equivalent:
\begin{enumerate}
	\item There exists non-decreasing unbounded $f:\bbbn\rightarrow\bbbn$ such that
	$$\forall p\in\bbbn,\qquad\sup_{G\in\mathcal C}N_f(G,p)<\infty$$
	\item Let $f_0(x)=\lceil\log_2\,x\rceil$. Then
	$$\forall p\in\bbbn,\qquad\sup_{G\in\mathcal C}N_{f_0}(G,p)<\infty$$
	\item the class $\mathcal C$ has  bounded expansion.
\end{enumerate}
Notice that the second condition is tight, according to Lemma~\ref{lem:logtight}.
\end{corollary}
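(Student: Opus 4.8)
The plan is to establish the cycle of implications $(2)\Rightarrow(1)\Rightarrow(3)\Rightarrow(2)$, drawing on Corollary~\ref{cor:1}, Theorem~\ref{thm:1}, and Dvo{\v r}{\'a}k's result (recalled in Section~\ref{sec:3}) that $\rdens{r}(G)$ is bounded by a polynomial function of $\trdens{r}(G)$. The implication $(2)\Rightarrow(1)$ is immediate: the function $f_0(x)=\lceil\log_2 x\rceil$ is itself non-decreasing and unbounded, hence it witnesses the existential statement in $(1)$.

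For $(1)\Rightarrow(3)$, the only delicate point is that Corollary~\ref{cor:1} is stated under the extra hypothesis $f(x)\le x$, which is not part of $(1)$. I would first replace $f$ by $\tilde f(x)=\min(f(x),x)$. This $\tilde f$ is non-decreasing (a minimum of two non-decreasing functions), satisfies $\tilde f(x)\le x$ by construction, and remains unbounded: given $M$, choose $x_0$ with $f(x)\ge M$ for all $x\ge x_0$, so that $\tilde f(x)\ge M$ for all $x\ge\max(x_0,M)$. Since $\tilde f\le f$, any edge-colouring in which every cycle $\gamma$ gets at least $\min(f(|\gamma|),p+1)$ colours also gives $\gamma$ at least $\min(\tilde f(|\gamma|),p+1)$ colours, whence $N_{\tilde f}(G,p)\le N_f(G,p)$ and $\sup_{G\in\mathcal C}N_{\tilde f}(G,p)<\infty$ for every $p$. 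Applying Corollary~\ref{cor:1} to $\tilde f$ then shows that $\mathcal C$ has bounded expansion.

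For $(3)\Rightarrow(2)$, assume $\mathcal C$ has bounded expansion. By Dvo{\v r}{\'a}k's theorem this is equivalent to $\sup_{G\in\mathcal C}\trdens{r}(G)<\infty$ for every $r$; in particular, for each integer $p$ there is a constant $c_p$ with $\trdens{2^p}(G)\le c_p$ for all $G\in\mathcal C$. Theorem~\ref{thm:1} then gives $N_{f_0}(G,p)\le P_p(\trdens{2^p}(G))$, and since $P_p$ is a polynomial it is bounded on the interval $[0,c_p]$; hence $\sup_{G\in\mathcal C}N_{f_0}(G,p)<\infty$, which is exactly $(2)$.

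The whole argument is thus essentially bookkeeping once the three imported facts are in place; the one genuinely new ingredient is the truncation $f\mapsto\min(f(x),x)$ needed to bring Corollary~\ref{cor:1} into play, and this is the only step where I anticipate any friction, since without it $N_f(G,p)$ may be infinite on graphs containing short cycles. Finally, the closing tightness remark is precisely Lemma~\ref{lem:logtight}: if $f(2^{p-1})>p-1$ for some $p$, then $N_f(\cdot,p)$ is already unbounded on the minor-closed (hence bounded-expansion) class of graphs of tree-depth at most $p$, so $f_0(x)=\lceil\log_2 x\rceil$ is the pointwise-largest function for which condition $(1)$ can hold on every class of bounded expansion.
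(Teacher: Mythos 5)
Your proof is correct and follows the paper's intended (but unwritten) proof: $(2)\Rightarrow(1)$ is immediate since $f_0$ is non-decreasing and unbounded, $(1)\Rightarrow(3)$ is Corollary~\ref{cor:1}, and $(3)\Rightarrow(2)$ combines Theorem~\ref{thm:1} with Dvo{\v r}{\'a}k's polynomial equivalence of $\rdens{r}$ and $\trdens{r}$. The paper leaves the corollary without an explicit proof, silently assuming the hypothesis $f(x)\le x$ of Corollary~\ref{cor:1}; your truncation $\tilde f(x)=\min(f(x),x)$, together with the monotonicity observation $\tilde f\le f\Rightarrow N_{\tilde f}(G,p)\le N_f(G,p)$, is exactly the small patch needed to make $(1)\Rightarrow(3)$ airtight, so you have actually filled a genuine (if minor) gap rather than merely reproduced the argument.
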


\section{Bounds on $\arb_p(G)$}
\label{sec:4}

The lower bound $\left( \mtrdens{\frac{p-1}{2}}(G)\right)^{1/p}$ for $\arb_p(G)$ is easy.

\begin{lemma}
\label{lem:lowbound}
Let $G$ be a graph and let $p$ be a positive integer. Then $(\arb_p(G))^p $ is greater than or equal to the maximum arboricity of a multigraph $H$ such that $G$ contains a $\leq (p-1)$-subdivision of $H$, that is,
\begin{equation}
(\arb_p(G))^p \geq\max\{\arb(H), H\in G\mshtm\bigl(\tfrac{p-1}{2}\bigr)\}\ge\mtrdens{\tfrac{p-1}{2}}(G).
\end{equation}
\end{lemma}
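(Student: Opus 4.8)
The claim has two parts: the inequality $(\arb_p(G))^p \geq \arb(H)$ for every $H \in G\mshtm(\tfrac{p-1}{2})$, and the trivial bound $\arb(H) \geq \mtrdens{\frac{p-1}{2}}(G)$ for a suitably chosen $H$. The second part is immediate: by definition $\mtrdens{\frac{p-1}{2}}(G) = \max_{H \in G\mshtm(\frac{p-1}{2})} \|H\|/|H|$, and for any multigraph $H$ we have $\arb(H) \geq \lceil \|H\|/(|H|-1)\rceil \geq \|H\|/|H|$ by Nash-Williams, so taking $H$ to be a density-maximizer and then noting $\arb(H) \geq \|H\|/|H|$ gives the final inequality. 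So the real content is the first inequality, and I would state it as: if $G$ contains a $\leq(p-1)$-subdivision of $H$, then $\arb(H) \leq (\arb_p(G))^p$.

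**Main step.** Fix an optimal colouring $c \colon E(G) \to [N]$ with $N = \arb_p(G)$, so every cycle of $G$ receives at least $\min(|C|, p+1)$ colours. Let $\sigma$ be a $\leq(p-1)$-subdivision of $H$ sitting inside $G$: each edge $e = uv$ of $H$ corresponds to a path $P_e$ in $G$ of length at most $p$ joining the branch vertices for $u$ and $v$. For each such path, record the multiset — or rather, to keep things finite, the \emph{set with multiplicity up to $p$} of colours appearing on $P_e$; since $|P_e| \leq p$, this is a function from $E(H)$ into the set of multisets of size $\leq p$ over $[N]$, of which there are at most $N^p$ (padding shorter paths arbitrarily). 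This partitions $E(H)$ into at most $N^p$ colour classes $E_1, \dots, E_{N^p}$. The key claim is that each $E_j$ induces a forest in $H$. Indeed, if some $E_j$ contained a cycle $C = e_1 e_2 \cdots e_\ell$ of $H$, then concatenating the corresponding paths $P_{e_1}, \dots, P_{e_\ell}$ in $G$ produces a closed walk, which contains a cycle $\gamma$ of $G$; all edges of $\gamma$ lie on paths $P_{e_i}$ with $e_i \in E_j$, hence all carry colours from the single $\leq p$-element colour multiset common to $E_j$, so $\gamma$ gets at most $p$ colours. But the colouring $c$ forces $\gamma$ to get $\min(|\gamma|, p+1)$ colours, so $|\gamma| \leq p$; yet $\gamma$ is built from the internally disjoint paths realizing a cycle of $H$ — here I need $|\gamma| \geq p+1$, or more carefully, I need to extract from the closed walk an \emph{actual cycle} that still uses edges from at least two distinct... no. Let me restate: I need $\gamma$ to have length $\geq p+1$ to get a contradiction, OR I need $\gamma$ to already be "rainbow-forced" in a way the single multiset forbids.

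**The obstacle, and how to handle it.** The subtlety is that a short cycle in $H$ — say a digon or triangle — might map, via paths of length $1$, to a genuinely short cycle $\gamma$ in $G$ with $|\gamma| \leq p$, and then $c$ only promises $|\gamma|$ colours, which a single $\leq p$-multiset can supply. So the naive argument fails and the bound $N^p$ must be doing something cleverer. The resolution: instead of one colouring, iterate. Since each class $E_j$ induces a subgraph of $H$ in which \emph{every cycle of $H$ lifting to a cycle of $G$ of length $\leq p$ survives}, but cycles of $H$ lifting to longer $G$-cycles are broken, one repeats the construction on each $E_j$: the subgraph $H[E_j]$, with its inherited subdivision, has the property that each of its edge-paths has length $\leq p$, and — crucially — any cycle of $H[E_j]$ of "$G$-length" $\leq p$ has all its edges already length-$1$ paths with a fixed colour, so $H[E_j]$'s cycles that remain are themselves short, of length $\leq p$ in $H$; on such a bounded-girth-from-above piece one applies the rainbow part of the colouring. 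Concretely: within $E_j$, all paths have the \emph{same} colour multiset, so any cycle of $H[E_j]$ lifts to a $G$-cycle all of whose edges lie in at most $p$ colour classes; for this $G$-cycle to be legal it must have length $\leq p$, hence the lifted cycle has $\leq p$ edges, hence (paths having length $\geq 1$) the $H$-cycle has $\leq p$ edges — so $H[E_j]$ has girth issues only up to length $p$, and we recurse with parameter $p-1$: restrict attention to the length-$1$ paths, use that $c$ restricted there is a proper-enough colouring forcing cycles of length $k \leq p$ to get $k$ colours, so the sub-subclasses... This telescopes to the exponent $p$. I expect the bookkeeping of this recursion — getting exactly $N^p$ and not $N^{p}\cdot(\text{junk})$ — to be the main obstacle; the clean way is an induction on $p$ showing $\arb_p(G)^p \geq \arb(H)$ by passing from $p$ to $p-1$ after contracting the length-$1$-path structure, with the base case $p=1$ being Nash–Williams directly. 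The second inequality $\arb(H)\ge\mtrdens{\frac{p-1}{2}}(G)$ then follows as noted above.
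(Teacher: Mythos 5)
You correctly set up the single-pass argument (partition $E(H)$ by the colour set of the corresponding path), but the ``obstacle'' you identify is not actually an obstacle, and the recursive repair you sketch is both incomplete and unnecessary. The key fact you missed: if $e_1,\dots,e_\ell$ are the edges of a monochromatic cycle of $H$, they all carry the \emph{same} colour set $X$, which means each path $P_{e_i}$ in $G$ uses \emph{every} colour of $X$ and hence has length at least $|X|$. The concatenated cycle $C$ of $G$ therefore has length $|C|=\sum_i|P_{e_i}|\geq \ell\,|X|\geq 2|X|$, while it receives at most $|X|$ colours. Since $\min(|C|,p+1)\geq\min(2|X|,p+1)>|X|$ (using $|X|\leq p$ and $|X|\geq 1$), this contradicts the colouring hypothesis outright, with no case split on whether $|C|\leq p$. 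Your worry that a short cycle of $H$ mapping to a short cycle of $G$ might be legally coloured by a single small set is exactly what this observation rules out: a digon of $H$ whose two paths both have colour set $X$ yields a $G$-cycle of length $\geq 2|X|$ that sees only $|X|$ colours, which is already illegal.

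Two smaller remarks. First, your concern about whether the concatenation is an ``actual cycle'' rather than merely a closed walk is resolved because the subdivision $S$ sits inside $G$ as a subgraph, so the branch paths are internally vertex-disjoint with distinct branch vertices as endpoints; the concatenation along an $H$-cycle is therefore a genuine cycle of $G$. Second, use \emph{sets} of colours (as the paper does) rather than padded multisets: the count of nonempty $\leq p$-subsets of the palette is what is bounded by $(\arb_p(G))^p$, and the ``same set'' property is exactly what drives the length lower bound above. The recursion you outline would at best reproduce this bound with considerable extra bookkeeping, and as written it is not completed; it should be dropped in favour of the direct argument.
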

\begin{proof}
Let $c$ be an edge colouring of $G$ with a set $J$ of $\arb_p(G)$ colours such that every cycle $C$ of $G$ gets at least $\min(|C|,p+1)$ colours.
Assume that $G$ includes a $\leq (p-1)$-subdivision $S$ of a multigraph $H$.
Colour each edge $e$ of $H$ by the set $X$ of   colours   used by the edges of the path  of length at most $p$ in $G$ corresponding to $e$ in $S$. The total number of colours used by edges of $H$ is at most the number of $\leq p$-subsets of the $J$, which is less than $(\arb_p(G))^p$.
If $(\arb_p(G))^p  < \arb(H)$, then  $H$ has a monochromatic cycle, each edge being coloured by a set $X$ of at most $p$ colours.
Then the corresponding cycle $C$ of $G$ uses at most $|X|<p+1$ colours and has length at least $2|X|$, contradicting the colouring assumption.
Thus $(\arb_p(G))^p \geq \arb(H) $.
\end{proof}

The upper bound is more involved. First, we introduce the admittedly rather technical definition of fraternal completion of oriented multigraphs.

A digraph $\vG$ is {\em fraternally oriented} if $(x,z)\in E(\vG)$ and
$(y,z)\in E(\vG)$ implies $(x,y)\in E(\vG)$ or $(y,x)\in E(\vG)$. This
concept was introduced by Skrien \cite{skrien82} and a
characterization of fraternally oriented digraphs having no symmetrical
arcs has been obtained by Gavril and Urrutia \cite{gavril92}, who also proved that
triangulated graphs and circular arc graphs are all fraternally
orientable graphs.

In the context of multigraphs, this notion may be extended as follows:
\begin{definition}
Let $\vG$ be a directed multigraph and let $a$ be a positive integer. A {\em fraternal completion} of $\vG$ of {\em depth} $a$
is a triple $\mathfrak f=((E_1,\dots,E_a),w, \kappa)$, where
\begin{itemize}
\item $E_1=E(\vG)$ is the arc set of $\vG$; for each $2\leq i\leq a$, $E_i$ is the arc set of a multigraph having $V(\vG)$ as its vertex set; for every $1\leq i< j\leq a$, $E_i\cap E_j=\emptyset$  (although {\bf different} arcs of $E_i$ and $E_j$ may have the same head and tail);
\item for $e\in\bigcup_{1\leq i\leq a}E_i$, the {\em weight} $w(e)$ of $e$ is the integer $i\in [a]$ such that $e\in E_i$;
\item $\kappa:\bigcup_{1<i\leq a}E_i\rightarrow \bigl(\bigcup_{1\leq i\leq a}E_i\bigr)^2$ is such that for every $e\in \bigcup_{1<i\leq a}E_i$ with $\kappa(e)=(f,g)$ we have
\begin{align*}
{\rm tail}(f)&\neq {\rm tail}(g)\\
w(e)&=w(f)+w(g)\\
{\rm tail}(e)&={\rm tail}(f)\\
{\rm head}(e)&={\rm tail}(g)\\
{\rm head}(f)&={\rm head}(g);
\end{align*}
\item conversely, for every $i,j\in\bbbn, f\in E_i$ and $g\in E_j$ such that $i+j\leq a, {\rm tail}(f)\neq {\rm tail}(g)$ and ${\rm head}(f)={\rm head}(g)$ there exists a unique $e\in E_{i+j}$ such that $\kappa(e)\in\{(f,g),(g,f)\}$.
\end{itemize}
\end{definition}
We also define the {\em arc set} $E_\mathfrak f$ of the fraternal completion $\mathfrak f$ by $E_\mathfrak f=\bigcup_{1\leq i\leq a}E_i$ (notice that $E_{\mathfrak f}$ includes no loop) and define $a={\rm depth}(\mathfrak f)$ as the depth of $\mathfrak f$.
A fraternal completion $\mathfrak f'=((E_1',\dots,E_{a'}'),w', \kappa')$ of $\vG$ {\em extends} another fraternal completion $\mathfrak f=((E_1,\dots,E_a),w, \kappa)$ of $\vG$
(or is an {\em extension} of $\mathfrak f$) if
\begin{itemize}
    \item $a'={\rm depth}(\mathfrak f')>a={\rm depth}(\mathfrak f)$,
    \item for every $1\leq i\leq {\rm depth}(\mathfrak f)$ we have $E_i'=E_i$,
    \item the restrictions of $\kappa$ and $\kappa'$ to $E_{\mathfrak f}$ coincide.
\end{itemize}

We now state an easy lemma of fraternal completions:
\begin{lemma}
\label{lem:fratobv}
For every oriented multigraph $\vG$ and every positive integer $a$,
\begin{itemize}
    \item $\vG$ has a unique fraternal completion of depth $1$ defined by $E_1=E(\vG)$,
    \item every fraternal completion $\mathfrak f$ of depth $a$ has an extension of depth $a+1$.
\end{itemize}
\end{lemma}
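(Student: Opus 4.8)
The plan is to unwind the definition of fraternal completion directly in each of the two cases; neither needs more than bookkeeping.

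\emph{First item.} A fraternal completion of depth $1$ is a triple $((E_1),w,\kappa)$, and the clauses of the definition leave no freedom. The first clause forces $E_1=E(\vG)$; the second forces $w(e)=1$ for every $e\in E_1$. Since there is no index $i$ with $1<i\le 1$, the set $\bigcup_{1<i\le 1}E_i$ is empty, so $\kappa$ is the empty function; and since any pair $f\in E_i$, $g\in E_j$ has $i+j\ge 2>1$, the ``converse'' clause is vacuous. Hence the triple is uniquely determined.

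\emph{Second item: construction.} Given $\mathfrak f=((E_1,\dots,E_a),w,\kappa)$, I would extend it by adjoining a single new level $E_{a+1}$. Call an unordered pair $\{f,g\}$ of \emph{distinct} arcs of $E_{\mathfrak f}$ \emph{admissible} if $w(f)+w(g)=a+1$, ${\rm head}(f)={\rm head}(g)$ and ${\rm tail}(f)\ne {\rm tail}(g)$; note both arcs of such a pair lie in some $E_i$ with $i\le a$, because $w(f),w(g)\ge 1$. Fix any linear order on $E_{\mathfrak f}$; for each admissible pair let $(f,g)$ be its ordering, and introduce one fresh arc $e_{\{f,g\}}$ with ${\rm tail}(e_{\{f,g\}})={\rm tail}(f)$ and ${\rm head}(e_{\{f,g\}})={\rm tail}(g)$. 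Set $E_{a+1}=\{e_{\{f,g\}} : \{f,g\}\text{ admissible}\}$, put $w'(e_{\{f,g\}})=a+1$ (and $w'=w$ on the old arcs), and put $\kappa'(e_{\{f,g\}})=(f,g)$ while letting $\kappa'$ agree with $\kappa$ on $\bigcup_{1<i\le a}E_i$. By construction $\mathfrak f'=((E_1,\dots,E_a,E_{a+1}),w',\kappa')$ is an extension of $\mathfrak f$ in the stated sense ($a+1>a$, the old levels are untouched, and $\kappa'$ restricts to $\kappa$ on $E_{\mathfrak f}$).

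\emph{Second item: verification.} The first three clauses are routine: each new arc has both ends in $V(\vG)$ and is loopless because ${\rm tail}(f)\ne{\rm tail}(g)$; $w'$ records membership in the $E_i$'s; and for $e_{\{f,g\}}$ the five required conditions on $\kappa'(e_{\{f,g\}})=(f,g)$ all hold immediately from admissibility and the choice of endpoints. For the ``converse'' clause with $i+j\le a+1$: if $i+j\le a$ the required unique $e\in E_{i+j}$ is supplied by $\mathfrak f$ itself, and since $\kappa'=\kappa$ there and every new arc lies in $E_{a+1}$, nothing new interferes; if $i+j=a+1$ then $\{f,g\}$ is admissible, $e_{\{f,g\}}$ witnesses existence, and it is the unique arc of $E_{a+1}$ with $\kappa'(e)\in\{(f,g),(g,f)\}$ because distinct admissible pairs produce distinct arcs carrying distinct $\kappa'$-values.

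\emph{Main obstacle.} There is essentially no obstacle; the only point that needs care is the \emph{uniqueness} half of the ``converse'' clause at the new level. One must create exactly one arc per \emph{unordered} admissible pair rather than per ordered pair (hence the use of a linear order to pick a representative), and one must note that the possibly many parallel arcs added between the same two vertices — arising from different admissible pairs with coinciding tails — cause no clash, since uniqueness is only asserted relative to a fixed pair $(f,g)$.
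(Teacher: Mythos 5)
Your proof is correct and follows essentially the same route as the paper's: you verify the depth-$1$ case by inspecting the definitional clauses, and for the extension you use a linear order on $E_{\mathfrak f}$ (the paper uses an arbitrary numbering $\nu$) to pick one representative ordering per admissible pair and define $E_{a+1}$, $w'$, $\kappa'$ accordingly. You spell out the verification of the clauses a bit more explicitly than the paper, which simply declares the result obvious, but the construction is the same.
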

\begin{proof}
The first item is direct from the definition.

For the second item, let $\mathfrak f=((E_1,\dots,E_a),w, \kappa)$ be a fraternal completion of $\vG$ of depth $a$.
Consider an arbitrary numbering $\nu$ of $E_{\mathfrak f}$.
Define
\begin{equation*}
\begin{split}
E_{a+1}=\{e_{f,g}: (f,g)\in E_{\mathfrak f}^2, \qquad &w(f)+w(g)=a+1, \nu(f)<\nu(g),\\
&{\rm tail}(f)\neq{\rm tail}(g),\text{ and }{\rm head}(f)={\rm head}(g)\},
\end{split}
\end{equation*}
where $e_{f,g}$ is an arc with  ${\rm tail}(e_{f,g}) = {\rm tail}(f)$ and   ${\rm head}(e_{f,g})= {\rm tail}(g)$;
define the mapping $\kappa':\bigcup_{1\leq i\leq a+1} E_i\rightarrow E_{\mathfrak f}$ by
$\kappa'(e)=\kappa(e)$ if $w(e)\leq a$ and $\kappa'(e_{f,g})=(f,g)$ for $e_{f,g}\in E_{a+1}$;
also define $w':E_{\mathfrak f}\cup E_{a+1}$ by $w'(e)=i$ if $e\in E_i$.
Then $\mathfrak f'=((E_1,\dots,E_{a+1},w',\kappa')$ is obviously an extension of $\mathfrak f$ of depth $a+1$.
\end{proof}

Suppose $\mathfrak f=((E_1,\dots,E_a),w, \kappa)$ is a fraternal
completion of $\vG$ of depth $a$.  We associate
 to each arc $e\in E_{\mathfrak f}$ a walk $W(e)$ of $\vG$ defined as follows:
\begin{itemize}
    \item If $w(e)=1$ then $W(e)$ is the walk $e$,
    \item Otherwise, if $\kappa(e)=(f,g)$ then $W(e)$ is the walk $W(f)$ followed by the reverse $\overline{W}(g)$ of the walk $W(g)$, what we denote by $W(e)=W(f)\cdot \overline{W}(g)$.
\end{itemize}
This way, each arc $e$ of $E_{\mathfrak f}$ is associated a walk $W(e)$ in $\vG$ of length $w(e)$ which has the same endpoints as $e$.
An arc $e\in E_{\mathfrak f}$ is called {\em simple} if $W(e)$ is a path.

%%%
Let $\vG$ be a directed multigraph, let $\mathfrak f=((E_1,\dots,E_a),w, \kappa)$ be a fraternal completion of $\vG$ of depth $a$.
Let $\prec$ be the partial order on $E_{\mathfrak f}$ defined by transitivity from the conditions
$$\kappa(e)=(f,g)\quad\Longrightarrow\quad f\prec e\text{ and }g\prec e.$$
Notice that if $e\in E_{\mathfrak f}$ and $f\in E_1$ we have $e\succeq f$ if and only if $f$ belongs to the walk $W(e)$.

%%%
For $i=1,2,\ldots, a$,  let $\vH_i$ (resp. $\vH_{\leq i}$) be the
multigraphs with vertex set $V(\vG)$ and arc set $E_i$ (resp.
$\bigcup_{j\leq i}E_j$). In particular, $\vG=\vH_1$. For arcs
$e_1,e_2$ of $\vG$ and a fraternal completion $\mathfrak
f=((E_1,\dots,E_a),w,\kappa)$ of $\vG$ of depth $a$ of $\vG$, say
that a pair $(e_1,e_2)\in E_1^2$ is a {\em conflict} if there exists arcs
$f_1\succeq e_1$ and $f_2\succeq e_2$ and a
% XD (maybe empty) , since the path starts with $f_1$, it contains at least the edge $f_1$, and cannot be empty.
directed
path of length at most $a$ of $\vH_{\leq a}$ starting with $f_1$
and ending at one of the endpoints of $f_2$ (notice that we allow
$f_1=f_2$).

\begin{lemma}
\label{lem:color}
Assume
%XD
$\vG$ is an orientation of $G$, $\mathfrak
f$ is a fraternal completion of $\vG$ of depth $a$.
Assume that  $c:E(\vG)\rightarrow [N]$ is a colouration of the edges of $\vG$ such that for every conflict $(e_1,e_2)$ we have $c(e_1)\neq c(e_2)$.
Then every cycle $\gamma$ gets at least $\min(|\gamma|,a+1)$ colours.
\end{lemma}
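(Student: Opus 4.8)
The plan is to argue by contradiction: suppose some cycle $\gamma$ of $G$ receives at most $\min(|\gamma|,a+1)-1$ colours. I split into the two cases according to which term realizes the minimum.

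First, if $|\gamma|\le a$, then $\gamma$ receives at most $|\gamma|-1$ colours, so by pigeonhole two distinct edges $e_1,e_2$ of $\gamma$ receive the same colour. I want to show that $(e_1,e_2)$ (in one of the two orientations inherited from $\vG$) is a conflict, which contradicts the hypothesis on $c$. Here $\gamma$ is a cycle in $G$ of length $\le a$; orient it consistently and look at the arcs of $\vG$ along it. The set of orientations of the edges of $\gamma$ need not all agree, but reading around $\gamma$ from one endpoint of $e_1$, the portion of $\gamma$ between $e_1$ and $e_2$ gives a walk in $\vG$ of length $\le a$. The technical point is that a ``conflict'' is defined in terms of a \emph{directed} path in $\vH_{\le a}$ that starts with $f_1\succeq e_1$ and ends at an endpoint of $f_2\succeq e_2$, not in terms of an undirected sub-path of $\gamma$. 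So the real work is to use the fraternal completion to straighten out a short undirected walk into a short directed path: each time the walk reverses orientation at a vertex $z$ (two arcs $(x,z),(y,z)$ both pointing into $z$), the fraternal-completion axioms guarantee an arc between $x$ and $y$ (in $E_i$ for the appropriate $i$), and iterating the $\kappa$ construction collapses a length-$\ell$ undirected walk with endpoints $u,v$ into arcs $f\succeq$ (some original edge) realizing a directed path. This is exactly the content of the $W(e)$ construction: an arc $e\in E_{\mathfrak f}$ of weight $w(e)$ ``is'' a directed walk of that length, and $e\succeq f$ for each original edge $f$ on $W(e)$.

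Second, if $|\gamma|\ge a+1$, then $\gamma$ receives at most $a$ colours, so among any $a+1$ consecutive edges along $\gamma$ two share a colour; more usefully, I'd pick a sub-path of $\gamma$ on which exactly one colour is repeated at the two ends but which is short — concretely, walk along $\gamma$ and stop as soon as a colour repeats; since only $\le a$ colours appear, this happens within $a+1$ steps, giving two equally-coloured edges $e_1,e_2$ joined by an undirected sub-path of $\gamma$ of length $\le a$. Now I'm in the same situation as the first case, and the same straightening argument via the fraternal completion produces a conflict $(e_1,e_2)$, contradiction.

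\textbf{The main obstacle} I anticipate is the bookkeeping in the straightening step: carefully tracking orientations along the sub-path of $\gamma$, showing by induction on the length $\le a$ that a walk with endpoints $u,v$ in $\vG$ yields an arc $e\in E_{\le a}$ with $e\succeq$ (first original edge) and $\mathrm{head}(e)$ or $\mathrm{tail}(e)$ equal to an endpoint of the last original edge, and then matching this to the precise definition of conflict — in particular handling the case $f_1=f_2$ (a very short sub-path, e.g.\ $e_1$ and $e_2$ adjacent or equal after collapsing) and the asymmetry that $\kappa(e)=(f,g)$ records $f\prec e$, $g\prec e$ so that $\succeq$ recovers membership in the walk. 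Once that correspondence is nailed down, both cases close immediately by the pigeonhole choice of $e_1,e_2$ and the hypothesis that conflicting edges get distinct colours. I'd also remark that the length-at-most-$a$ walk between $e_1$ and $e_2$ must be chosen as the \emph{shorter} of the two arcs of $\gamma$ between them when $|\gamma|\le a$, but in the repeated-colour-search of the second case the sub-path is automatically short, so no case analysis on which arc of $\gamma$ to take is needed there.
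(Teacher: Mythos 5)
Your plan hinges on a ``straightening'' claim -- that an undirected walk of length $\le a$ in $\vG$ whose first and last arcs are $e_1$ and $e_2$ can be collapsed, by iterating $\kappa$, into a witness that $(e_1,e_2)$ (or $(e_2,e_1)$) is a conflict -- but this claim is false. Concretely, let $\vG$ be the oriented path with vertices $a,b,c,d,e$ and arcs $(a,b),(c,b),(c,d),(e,d)$, and build a fraternal completion of depth $4$ by choosing the two weight-$2$ arcs to be $(c,a)$ (with $\kappa((c,a))=((c,b),(a,b))$) and $(c,e)$ (with $\kappa((c,e))=((c,d),(e,d))$). No pair of arcs in $E_1\cup E_2$ shares a head with distinct tails, so $E_3=E_4=\emptyset$ and $E_{\mathfrak f}=\{(a,b),(c,b),(c,d),(e,d),(c,a),(c,e)\}$. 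The arcs $\succeq (a,b)$ are $(a,b)$ and $(c,a)$, and every directed path in $\vH_{\le 4}$ that starts with one of them ends at $a$ or $b$; but the endpoints of the arcs $\succeq (e,d)$, namely $(e,d)$ and $(c,e)$, all lie in $\{c,d,e\}$. So neither $((a,b),(e,d))$ nor $((e,d),(a,b))$ is a conflict, despite the length-$4$ walk between them. The obstruction is exactly the orientation freedom in the completion: both weight-$2$ arcs point out of the same vertex $c$, so there is no way to chain them into a directed path, and your ``two arcs pointing into $z$ give a fraternal arc'' step says nothing about two arcs pointing out of $z$. This shows the induction you describe cannot be carried out for arbitrary short walks, and since your Case 2 explicitly reduces to that induction on a sub-walk, both cases of your proof break.

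The paper's argument sidesteps this by not fixing the colliding pair in advance. It works with an \emph{admissible sequence} $(e_1,\dots,e_{q+1})$ of simple arcs of $E_{\mathfrak f}$ whose walks $W(e_i)$ tile a contiguous segment of $\gamma$, where only $(e_1,\dots,e_q)$ is required to form a directed path of $\vH_{\le a}$ and $e_{q+1}$ is a dangling last term. One first maximizes $\sum_i w(e_i)$ and then, among maximizers, minimizes $q$. If the covered segment is shorter than $\min(a+1,|\gamma|)$, either the next arc $g$ of $\gamma$ can be appended, contradicting maximality, or $e_{q+1}$ points backward, i.e.\ $e_q$ and $e_{q+1}$ share a head; then the unique $f$ with $\kappa(f)\in\{(e_q,e_{q+1}),(e_{q+1},e_q)\}$ lets one either lengthen the covered segment or strictly shorten $q$, contradicting extremality in one of the two senses. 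This is precisely the mechanism that handles the free orientation choice in the fraternal completion -- whichever way $f$ points, the exchange improves the sequence. Only after the covered segment is shown to have at least $\min(a+1,|\gamma|)$ arcs is pigeonhole applied, and it is applied inside that segment, so the two equi-coloured original arcs $f_1,f_2$ automatically sit below a directed sub-path of $(e_1,\dots,e_q)$ and thus form a conflict. In short: the colliding pair is chosen \emph{after} the directed path is built, whereas your plan fixes the pair by pigeonhole first and then has to prove a conflict for it, which is exactly what the counterexample shows cannot always be done.
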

\begin{proof}
Assume  for contradiction that there exist in $\vG$ a cycle $\gamma=(v_1,\dots,v_{|\gamma|})$ which gets less than $\min(|\gamma|,a+1)$ colours.
We say a sequence $(e_1,e_2,\ldots, e_{q+1})$ of simple arcs in $E_{\mathfrak f}$ is {\em admissible} if
the $W(e_i)$ are pairwise arc-disjoint and form consecutive subpaths of $\gamma$
and $e_1,\dots,e_q$ form a directed path of $\vH_{\leq a}$.

Choose an admissible sequence $(e_1,e_2,\ldots, e_{q+1})$ in such a way that
$\sum_i w(e_i)$ is maximal and then that $q$ is minimum.
Without loss of generality, we may assume that for $1\leq i\leq q$ we have $e_i=(v_{a_i},v_{a_{i+1}})$  with $1=a_1<a_2<\dots<a_{q+1}\leq |\gamma|$.

First we show that $\sum_{i=1}^{q+1} w(e_i) \geq \min(a+1,|\gamma|)$.
Assume for contradiction that $\sum_{i=1}^{q+1} w(e_i)<\min(a+1,|\gamma|)$. Then $e_{q+1}$ has $v_{a_{q+1}}$ and $v_{a_{q+2}}$ as endpoints with $a_{q+1}<a_{q+2}
%XD \leq
< |\gamma|$
(actually we have $a_{q+2}=\sum_i w(e_i)-1$). Let $g$ be the arc of $\vG$ linking $v_{a_{q+2}}$ to the next vertex of $\gamma$. According to the maximality of $\sum_i w(e_i)$,
the sequence $(e_1,\dots,e_{q+1},g)$ is not admissible hence $e_1,\dots,e_{q+1}$ is not a directed path, that is: $e_{q+1}=(v_{a_{q+2}}, v_{a_{q+1}})$.
%XD, a right bracket is added
As $w(e_q)+w(e_{q+1})\leq \sum_i w(e_i)\leq a$ there exists an arc $f$ in $E_{\mathfrak f}$ such that $\kappa(f)=(e_q,e_{q+1})$ (see Fig~\ref{fig:chicolor}). The arc $f$ is clearly simple ($W(f)=W(e_q)\overline{W}(e_{q+1})$ or
$W(f)=W(e_{q+1})\overline{W}(e_{q})$).
If $\kappa(f)=(e_q,e_{q+1})$ then $(e_1,\dots,e_{q-1},f,g)$ is admissible, and $\sum_{i=1}^{q-1}w(e_i)+w(f)+w(g)=\sum_{i=1}^{q+1}w(e_i)+w(g)>\sum_{i=1}^{q+1} w(e_i)$, what contradicts the maximality of $\sum_i w(e_i)$. Otherwise, $\kappa(f)=(e_{q+1},e_q)$. Then, $(e_1,\dots,e_{q-1},f)$ is an admissible sequence such that $\sum_{i=1}^{q-1}w(e_i)+w(f)=\sum_{i=1}^{q+1}w(e_i)$, what contradicts the minimality of $q$ (for given $\sum_i w(e_i)$).

Thus $\bigl|\bigcup_i W(e_i)\bigr|=\sum_{i=1}^{q+1} w(e_i)\geq \min(a+1,|\gamma|)$. By assumption, the cycle $\gamma$ which gets less than $\min(|\gamma|,a+1)$ colours hence there exist
 $f_1,f_2\in\bigcup_i W(e_i)$ such that $c(f_1)=c(f_2)$.
 Let $b_1,b_2$ be such that $f_1\in W(e_{b_1})$ and $f_2\in W(e_{b_2})$. Without loss of generality we assume $b_1\leq b_2$. As $f_1\preceq e_{b_1}, f_2\preceq e_{b_2}$ and as there exists by construction a (maybe empty) directed path of length at most $a$ of $\vH_{\leq a}$ starting with $e_{b_1}$ and ending at one of the endpoints of $e_{b_2}$ we deduce that $(f_1,f_2)$ is a conflict, contradicting the hypothesis that $c(f_1)=c(f_2)$.
\end{proof}

\begin{figure}[h!t]
\begin{center}
    \includegraphics[width=0.75\textwidth]{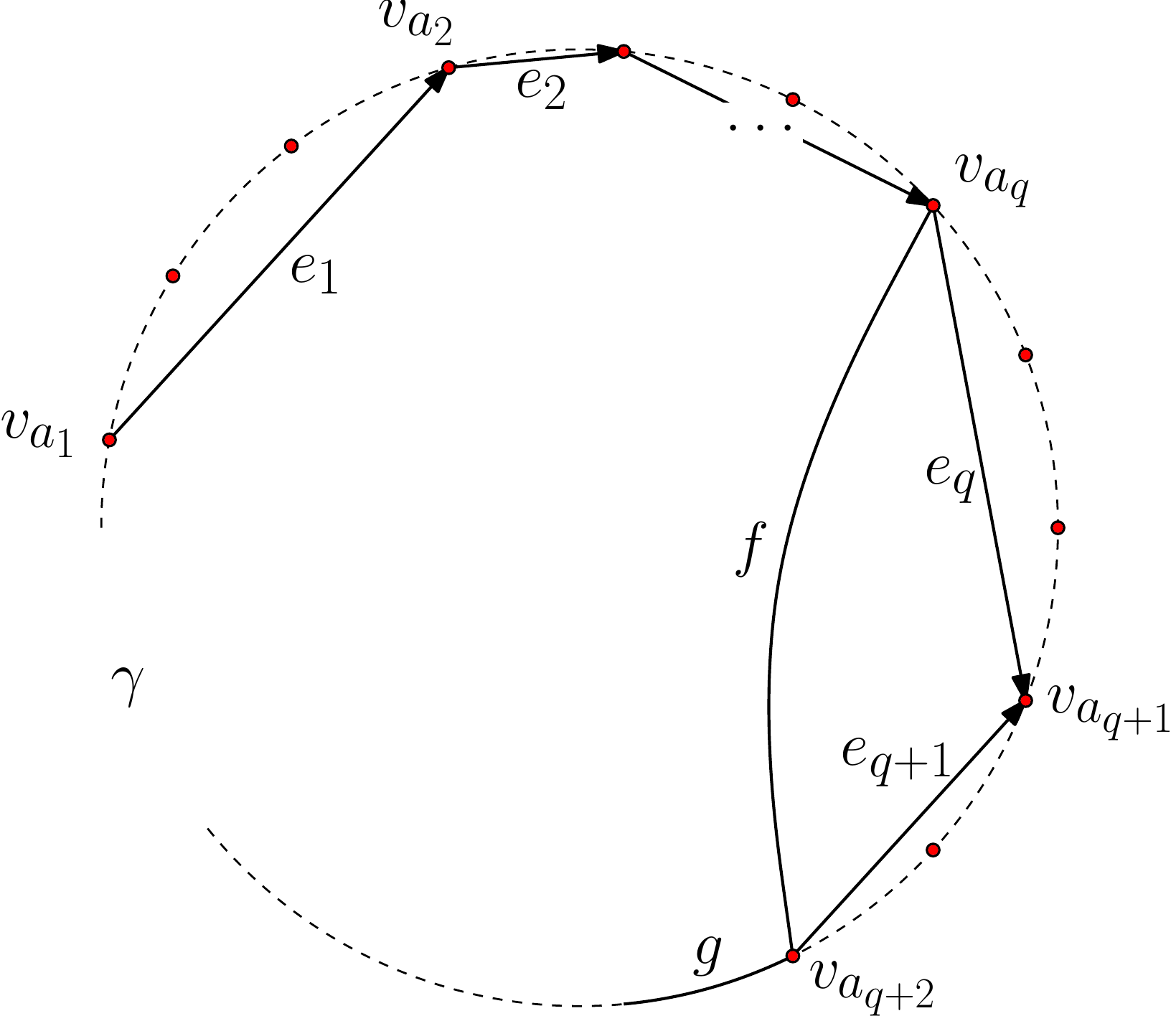}
\end{center}
\caption{Illustration for the proof of Lemma~\ref{lem:color}.}
 \label{fig:chicolor}
\end{figure}

To prove that $\arb_p(G) \le P_p(\mtrdens{\frac{p-1}{2}}(G))$ for some polynomial $P_p$,
it suffices to find a fraternal completion $\mathfrak f$ of an orientation $\vG$ of $G$ of depth $p$ so that each edge $e$ of $G$ is in
conflicts
with at most $P_p(\mtrdens{\frac{p-1}{2}}(G))$ other edges.
We shall see that this problem can be reduced to finding a fraternal completion with bounded in-degrees. Toward this end, let
$$C({\mathfrak f})=\sum \left\{\prod_j \md(\vH_{i_j}):{\sum_j i_j<{\rm depth}(\mathfrak f)}\right\}.$$

\begin{lemma}
\label{lem:din}
For every arc $e$ of $\vG$ we have
$$\bigl|\{ f\in E_{\mathfrak f}: f\succeq e\}\bigr| \leq C({\mathfrak f}).$$
\end{lemma}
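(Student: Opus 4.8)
The plan is to encode each arc $f$ with $f\succeq e$ by a bounded sequence of arcs of $\mathfrak f$, injectively enough that $|\{f\succeq e\}|$ is dominated by the number of such sequences, and then to count the sequences. First I would unwind the definition of $\prec$: if $f\succeq e$ then there is a chain $e=p_0,p_1,\dots,p_k=f$ of arcs of $E_{\mathfrak f}$ in which each $p_{i-1}$ is an immediate $\kappa$-part of $p_i$, i.e.\ for every $i$ there is an arc $s_i\in E_{\mathfrak f}$ (the \emph{partner}) with $\kappa(p_i)\in\{(p_{i-1},s_i),(s_i,p_{i-1})\}$. (In the decomposition tree of $f$ this is just the path from an occurrence of $e$ to the root, $s_i$ being the sibling met at level $i$.) Such a chain exists for every $f\succeq e$, with $k=0$ when $f=e$, so it suffices to bound the number of possible partner-sequences $(s_1,\dots,s_k)$.

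Two observations, both read straight off the defining clauses of a fraternal completion, do the work. (1) Whichever of the two ordered forms $\kappa(p_i)$ takes, one has ${\rm head}(s_i)={\rm head}(p_{i-1})$, ${\rm tail}(s_i)\neq{\rm tail}(p_{i-1})$ and $w(p_i)=w(p_{i-1})+w(s_i)$. (2) By the ``conversely'' clause of the definition, once $p_{i-1}$ and $s_i$ are fixed — with matching heads, distinct tails, and weights summing to at most $a={\rm depth}(\mathfrak f)$ — the arc $p_i$ is \emph{uniquely} determined. Observation (2) makes the reconstruction map $(s_1,\dots,s_k)\mapsto p_k=f$ well defined (build $p_1,p_2,\dots$ in turn), so $|\{f\in E_{\mathfrak f}:f\succeq e\}|$ is at most the number of partner-sequences. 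Observation (1) shows that along such a sequence ${\rm head}(s_i)$ is forced to equal ${\rm head}(p_{i-1})$, a vertex already determined by $s_1,\dots,s_{i-1}$; and since $w(e)+\sum_i w(s_i)=w(f)\le a$ while $w(e)\ge 1$, the weights $j_i:=w(s_i)\ge 1$ satisfy $j_1+\dots+j_k\le a-1$.

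The count is then immediate. Grouping partner-sequences by their weight profile $(j_1,\dots,j_k)$ (a tuple of positive integers with $\sum_i j_i\le a-1$) and choosing $s_1,s_2,\dots$ in order, at step $i$ the arc $s_i$ must lie in $E_{j_i}$ with head equal to the already-determined vertex ${\rm head}(p_{i-1})$; the number of such arcs is the in-degree of that vertex in $\vH_{j_i}$, hence at most $\md(\vH_{j_i})$. So the number of partner-sequences of a given profile is at most $\prod_{i=1}^{k}\md(\vH_{j_i})$, and summing over all profiles yields $\sum\{\prod_j\md(\vH_{i_j}):\sum_j i_j<a\}=C(\mathfrak f)$, the empty profile accounting for the term $1$.

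I expect observation (1) to be the only point needing genuine care: verifying that the partner $s_i$ always shares its head with $p_{i-1}$ no matter which slot of $\kappa(p_i)$ the arc $p_{i-1}$ occupies. That is exactly what pins ${\rm head}(s_i)$ down from the earlier choices and caps the branching at step $i$ by $\md(\vH_{j_i})$ rather than by something involving out-degrees or a factor $2$. The rest is bookkeeping: checking that the reconstruction in (2) is legitimate (the required arc exists by the ``conversely'' clause precisely because the head, tail and weight side-conditions hold), and noting that we produce an honest over-count — every $f\succeq e$ gives at least one partner-sequence, hence is counted at least once, and multiple occurrences of $e$ in the decomposition tree only inflate the bound.
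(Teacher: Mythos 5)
Your proof is correct and takes essentially the same route as the paper: encode each $f\succeq e$ by a covering chain from $e$ to $f$, observe that at each step the partner arc is pinned to have the same head as the current arc (so there are at most the in-degree $\md(\vH_{j})$ choices), and sum over weight profiles summing to less than $a$ to recover $C(\mathfrak f)$. You are somewhat more explicit than the paper about the reconstruction $(s_1,\dots,s_k)\mapsto f$ being well-defined via the ``conversely'' clause, but the underlying argument is the same.
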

\begin{proof}
For each $f \succeq e$, there is a sequence of arcs $g_1,g_2,\ldots,g_t$ in $\mathfrak f$ such that $g_1=e$, $g_t=f$ and $g_i$ covers $g_{i-1}$ in $\prec$.
So it suffices to show that for any   $g\succeq e$ with $w(f)=i$, for any $j > i$, there are at most $\md(\vH_{j-i})$ arcs $g'$ with $w(g')=j$ such that
$g'$ covers $g$ in $\prec$. This is so because the number of such arcs $g'$  is at most  the number of arcs $f'$
such that $\kappa(g')\in\{(g,f'),(f',g)\}$ and hence at most the number of arcs $f'$ with  ${\rm head}(f') = {\rm head}(g)$ in $\vH_{j-i}$, which is at most $\md(\vH_{j-i})$.
\end{proof}

\begin{lemma}
\label{lem:conflict}
For every arc $e_2$ of $\vG$ there exist at most $3pC({\mathfrak f})(\max(2,\md(\vH_p))^a$ arcs $e_1$ of $\vG$ such that $(e_1,e_2)$ is a conflict.
\end{lemma}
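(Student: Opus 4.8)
The plan is to fix the arc $e_2$ and unravel the definition of conflict: an arc $e_1$ is in conflict with $e_2$ exactly when there are arcs $f_1\succeq e_1$, $f_2\succeq e_2$ and a directed path $P$ of length at most $a$ in $\vH_{\leq a}$ whose first arc is $f_1$ and whose last vertex is an endpoint of $f_2$. I would bound the number of such $e_1$ by counting, in this order, the possible last vertices of $P$, then the paths $P$ ending at a given vertex, and finally the arcs $e_1$ compatible with the first arc $f_1$ of $P$.

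First, let $T$ be the set of vertices that can occur as the last vertex of such a path $P$. Every element of $T$ is an endpoint of some $f_2\succeq e_2$, and by Lemma~\ref{lem:din} there are at most $C(\mathfrak f)$ arcs $f_2$ with $f_2\succeq e_2$; hence $|T|\leq 2\,C(\mathfrak f)$.

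Second, fix $v\in T$ and count the directed paths of $\vH_{\leq a}$ of length at most $a$ that end at $v$. Reading such a path backwards from $v$, at each of at most $a$ steps we choose an arc entering the current vertex; since the in-degree of any vertex in $\vH_{\leq a}$ is at most $\sum_{i=1}^{a}\md(\vH_i)$, a geometric-series estimate bounds the number of these paths by $(\max(2,\md(\vH_p)))^a$, up to a small constant factor. Each such path $P$ has a well-defined first arc $f_1$, and every $e_1\preceq f_1$ is an arc of $E_1$ lying on the walk $W(f_1)$, whose length is $w(f_1)\leq a\leq p$; so at most $p$ arcs $e_1$ arise from a fixed $P$. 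Multiplying — at most $2\,C(\mathfrak f)$ choices of the last vertex, at most a constant times $(\max(2,\md(\vH_p)))^a$ paths ending there, and at most $p$ arcs $e_1$ per path — gives the announced bound $3p\,C(\mathfrak f)\,(\max(2,\md(\vH_p)))^a$.

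The delicate point is the second step, because the path $P$ may use arcs from every layer $\vH_1,\dots,\vH_a$, whereas the final bound only involves $\md(\vH_p)$. Handling this cleanly requires either a structural fact about the fraternal completions under consideration (so that each $\md(\vH_i)$ is, up to the $\max(2,\cdot)$, absorbed by $\md(\vH_p)$), or a more refined count that records the weight of each arc of $P$ and replaces the crude geometric series by a sum of products $\prod_j\md(\vH_{i_j})$ over compositions with $\sum_j i_j$ bounded by $a$ — to be compared with $C(\mathfrak f)$ itself — together with care about whether ``length at most $a$'' counts the arcs of $P$ or their total weight $\sum_j w(h_j)$. Pinning down the exact constant $3p$ also depends on the precise treatment of the degenerate case $f_1=f_2$ and of the two endpoints of $f_2$.
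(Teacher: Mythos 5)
Your plan is the same as the paper's: bound the candidates $f_2\succeq e_2$ by $C(\mathfrak f)$ via Lemma~\ref{lem:din}, count directed paths of $\vH_{\leq a}$ of length at most $a$ ending at an endpoint of $f_2$ to bound the first arcs $f_1$, and then charge each such $f_1$ with at most $|W(f_1)|\leq a$ arcs $e_1\preceq f_1$; the only cosmetic difference is that you group by last vertex (giving $2C(\mathfrak f)$) whereas the paper groups by $f_2$ (giving $C(\mathfrak f)$ arcs, with the factor $3$ coming from summing the head- and tail-ending path counts). The ``delicate point'' you flag --- that the geometric series ought to use the in-degree of $\vH_{\leq a}$ rather than $\md(\vH_p)$ --- is a genuine imprecision, but it is also present in the paper's own proof (the expression $\md(\vH_p)$ there is best read as $\md(\vH_{\leq p})$ or bounded by $\sum_i\md(\vH_i)$), and it is harmless downstream since Lemma~\ref{lem:ecolor0} only needs each $\md(\vH_i)$ to be polynomially bounded in $\mtrdens{\frac{p-1}{2}}(G)$.
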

\begin{proof}
According to Lemma~\ref{lem:din} there exists at most $C({\mathfrak f})$ arcs $f_2\in \bigcup_{i=1}^p E_i$ such that $f_2\succeq e_2$.
Given an arc $f_2$ of $\vH_{\leq p}$ there exist at most $(\md(\vH_p)-1)(1+\dots+\md(\vH_p)^{p-1})=\md(\vH_p)^{p}-1$ arcs $f_1$ such that there exists in $\vH_{\leq p}$ a directed path of
length at most $p$ starting with $f_1$ and ending at the head of $f_2$. Similarly we have at most $1+\dots+\md(\vH_p)^{p}$ arcs $f_1$ such that there exists in $\vH_p$ a directed path of
length at most $p$ starting with $f_1$ and ending at the tail of $f_2$. Hence for each $f_2$ we have at most
$3\max(2,\md(\vH_p)^{p}$ possibilities for $f_1$. As there are $|W(f_1)|\leq a$ arcs $e_1$ such that $f_1\succeq e_1$, we conclude.
\end{proof}

\begin{lemma}
\label{lem:ecolor0}
Let $Q_1(X),\dots,Q_p(X)$ be polynomials, and let $P_p$ be the polynomial defined by:
\begin{equation}
P_p(X)=6p(2+Q_p(X))^p\biggl(\sum_{\sum i_j<p}\prod_j Q_{i_j}(X)\biggr)+1.
\end{equation}
Let $G$ be a multigraph with a fraternal completion $\mathfrak f$ of depth $p$ such that
\begin{equation}
\forall 1\leq i\leq p,\qquad \md(\vH_i)\leq Q_i(\mtrdens{(p-1)/2}(G)).
\end{equation}
Then
\begin{equation}
\arb_p(G)\leq P_p(\mtrdens{\tfrac{p-2}{2}}(G)).
%\\a_p(G)&\leq \Delta(G)+P_p(\mtrdens{\tfrac{p-1}{2}}(G)).
\end{equation}
(Of course, the polynomial $P_p$ depends on polynomials $Q_1,\dots,Q_p$. This dependence will be clear from the context.)
\end{lemma}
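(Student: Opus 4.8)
The plan is to produce the edge-colouring promised by Lemma~\ref{lem:color} and to bound the number of colours it needs using Lemma~\ref{lem:conflict}. Let $\vG$ be the orientation of $G$ carrying the given fraternal completion $\mathfrak f$ (of depth $a=p$), and let $\mathcal K$ be the \emph{conflict graph}: its vertex set is $E(\vG)=E(G)$, and two distinct arcs $e_1,e_2$ are adjacent whenever $(e_1,e_2)$ or $(e_2,e_1)$ is a conflict. By Lemma~\ref{lem:color} applied with $a=p$, every proper colouring of $\mathcal K$, reread as an edge-colouring of $\vG$, has the property that each cycle $\gamma$ of $G$ receives at least $\min(|\gamma|,p+1)$ colours (the two equally-coloured arcs exhibited in the proof of Lemma~\ref{lem:color} are always distinct, so a colouring proper on the simple graph $\mathcal K$ does suffice). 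Hence it is enough to bound $\chi(\mathcal K)$.

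Here is the step that requires care: Lemma~\ref{lem:conflict} does not bound the maximum degree of $\mathcal K$, only the in-degree of an orientation of it. Set $B:=3p\,C(\mathfrak f)\,\bigl(\max(2,\md(\vH_p))\bigr)^p$. Orient each edge $\{e_1,e_2\}$ of $\mathcal K$ towards $e_1$ whenever $(e_1,e_2)$ is a conflict (breaking ties arbitrarily when both orderings are conflicts); then every vertex $e$ has out-degree at most $|\{e'\ne e:\ (e',e)\text{ is a conflict}\}|\le B$ by Lemma~\ref{lem:conflict}. A graph possessing an orientation with all out-degrees at most $B$ has at most $B\,|S|$ edges on any vertex subset $S$, hence some vertex of every subgraph has degree at most $2B$; that is, $\mathcal K$ is $2B$-degenerate and $\chi(\mathcal K)\le 2B+1$. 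Thus there is $c:E(\vG)\to[2B+1]$ proper on $\mathcal K$, and by the first paragraph
$$\arb_p(G)\ \le\ 2B+1\ =\ 6p\,C(\mathfrak f)\,\bigl(\max(2,\md(\vH_p))\bigr)^p+1.$$

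It remains to substitute the hypotheses of the lemma. Write $X:=\mtrdens{(p-1)/2}(G)$. From $\md(\vH_i)\le Q_i(X)$ we get $\max(2,\md(\vH_p))\le 2+Q_p(X)$, and directly from the definition of $C(\mathfrak f)$,
$$C(\mathfrak f)=\sum\Bigl\{\,\prod_j\md(\vH_{i_j})\ :\ \sum_j i_j<p\,\Bigr\}\ \le\ \sum_{\sum_j i_j<p}\ \prod_j Q_{i_j}(X).$$
Plugging these two estimates into the previous display gives
$$\arb_p(G)\ \le\ 6p\,(2+Q_p(X))^p\Bigl(\sum_{\sum i_j<p}\prod_j Q_{i_j}(X)\Bigr)+1\ =\ P_p(X),$$
which is the asserted bound, $X=\mtrdens{(p-1)/2}(G)$ being the quantity appearing in the hypotheses.

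The only step that is not routine is the degeneracy observation in the second paragraph: because the conflict relation is directed and its ``forward'' count is not controlled, one cannot bound $\Delta(\mathcal K)$, and must instead exploit that a bound on the \emph{in}-degree already bounds the degeneracy --- and hence the chromatic number --- of the underlying undirected graph. This is exactly where the factor $6p=2\cdot 3p$ in $P_p$ comes from; everything else is bookkeeping with the polynomial in-degree bounds.
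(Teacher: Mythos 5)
Your proof is correct and follows the same route as the paper, which simply says the bound follows ``according to Lemma~\ref{lem:conflict} and Lemma~\ref{lem:color}.'' What you add, usefully, is the step the paper leaves tacit: Lemma~\ref{lem:conflict} bounds, for each $e_2$, the number of $e_1$ with $(e_1,e_2)$ a conflict, which is only an ``in-degree''-type bound on the conflict graph $\mathcal K$, not a bound on $\Delta(\mathcal K)$; your orientation/degeneracy argument ($\mathcal K$ is $2B$-degenerate, hence $(2B+1)$-colourable) is indeed what accounts for the factor $2$ in $6p=2\cdot 3p$ appearing in $P_p$. Your parenthetical remark that the two equally-coloured arcs exhibited in the proof of Lemma~\ref{lem:color} are necessarily distinct is also needed (a conflict $(e,e)$ with $e_1=e_2$ would otherwise be unavoidable) and is correct, since the arcs $f_1,f_2$ there come from pairwise arc-disjoint subpaths of $\gamma$. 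Finally, you correctly arrive at the bound $P_p(\mtrdens{(p-1)/2}(G))$, which is what the paper's own proof displays; the $(p-2)/2$ in the lemma's stated conclusion appears to be a typographical slip, since the hypotheses are phrased in terms of $\mtrdens{(p-1)/2}(G)$ and that is the quantity the argument controls.
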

\begin{proof}
According to Lemma~\ref{lem:conflict} and Lemma~\ref{lem:color}, $G$ has
% POM an orientation
a colouration
of its edges by at most $P_p(\mtrdens{(p-1)/2}(G))$ colours such that every cycle $\gamma$ gets at least $\min(p+1,|\gamma|)$ colours, that is
\begin{equation*}
\arb_p(G)\leq P_p(\mtrdens{\tfrac{p-1}{2}}(G)).
\end{equation*}
\end{proof}

By Lemma \ref{lem:ecolor0}, to prove that for some polynomial $P_p$,  $\arb_p(G) \le P_p(\mtrdens{\frac{p-1}{2}}(G))$,
it suffices to prove that one can find a fraternal completion $\mathfrak f$ of an orientation $\vG$ of $G$ of depth $p$ so that $\md(\vH_i)$ is bounded by
some polynomial function $Q_i$ of $\mtrdens{\frac{p-1}{2}}(G)$. The construction of such a fraternal completion $\mathfrak f$
is easy: Let $H_i$ be the underlying graph of $\vH_i$. By definition, $H_1=G$ and for $i=1,2, \ldots, p-1$, $H_{i+1}$ is uniquely determined by $\vH_j$ for $j=1,2, \ldots, i$.
For $i=1,2,\ldots, p$, we orient the edges of $H_i$ to obtain $\vH_i$ so that $\md(\vH_i) = \lceil \mtrdens{0}(H_i) \rceil$. This defines a fraternal completion $\mathfrak f$ of an orientation $\vG$ of $G$ of depth $p$. In the following, we shall show that for this fraternal completion $\mathfrak f$,  $\md(\vH_i)$ is bounded by
some polynomial function of $\mtrdens{\frac{p-1}{2}}(G)$.

For $i=1,2, \ldots,p$, let $T_i$ be the 
%graph whose
%vertex set is the disjoint union of $V(\vG)$ and a  set of
%cardinality at most $(i-1)|w^{-1}(i)|$, which is obtained from the
%empty graph by adding, for each arc $e\in E_i$  an induced path of
%length $i$ linking ${\rm tail}(e)$ and ${\rm head}(e)$ (with no
%interior vertex in $V(\vG)$). In other words, $T_i$ is a
$(i-1)$-subdivision of 
the underlying graph $H_i$ of
$\vH_i$. Hence
$$H_i\in T_i\mshtm (\tfrac{i-1}{2}).$$
In particular, $H_1=T_1=G$.

For integer $m$, let $G\bullet m$ be the multigraph with vertex set $V(G)\times [m]$ where
$\{(x,i),(y,j)\}$ is an edge of $G\bullet m$ of multiplicity $k$ if and only if
$\{x,y\}$ is an edge of $G$ of multiplicity $k$. In the following section, we shall prove the following two lemmas.

\begin{lemma}
\label{lem:mlex}
Let $G$ be a multigraph, let $m$ be a positive integer and let $r$ be a positive half-integer. Then
\begin{equation}
\label{eq:mlex}
\mtrdens{r}(G\bullet m)\leq (2r(m-1)+1)\mtrdens{r}(G)+m^2\mtrdens{0}(G)+m-1.
\end{equation}
\end{lemma}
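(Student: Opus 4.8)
The plan is to start with an arbitrary $H\in (G\bullet m)\mshtm r$, i.e.\ a multigraph $H$ such that $G\bullet m$ contains a $\leq 2r$-subdivision $S$ of $H$ as a subgraph, and to bound $\|H\|/|H|$ by the right-hand side of \eqref{eq:mlex}. Write the vertices of $G\bullet m$ as pairs $(x,i)$ with $x\in V(G)$, $i\in[m]$, and let $\pi\colon V(G\bullet m)\to V(G)$ be the projection $\pi(x,i)=x$. The key point is that the edge set of $G\bullet m$ is naturally partitioned (as a multiset) by the fibres of the projection onto $E(G)$: every edge of $G\bullet m$ lies above some edge $xy$ of $G$, and for a fixed $xy$ there are at most $m^2$ choices of endpoints (and the multiplicity is inherited), so locally $G\bullet m$ looks like $K_{m,m}$ (or a blow-up thereof). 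I would split the branch vertices of $H$ into two classes according to whether, when we contract the subdivision paths, they collapse onto a vertex of $G$ that already ``carries weight'' or not; more precisely I would classify each subdivision path of $S$ according to whether its two endpoints project to the same vertex of $G$ or to distinct vertices.

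First I would handle the paths whose endpoints project to \emph{distinct} vertices of $G$. Contracting such a path of $S$ and then projecting by $\pi$ yields a walk of length at most $2r$ in $G$; so after this step the ``inter-fibre'' part of $H$ projects onto a shallow topological minor of $G$ at depth $r$ in which each branch vertex of $G$ has been used at most once but now there may be up to $m$ parallel copies of each branch vertex of the depth-$r$ minor. This is exactly where the operation $\bullet m$ reappears one level down, and this is where the factor $2r(m-1)+1$ comes from: a branch set of the minor, when lifted back through the $m$ fibres, spans a vertex set whose size is essentially multiplied, so the density estimate $\mtrdens{r}(H')\le \mtrdens{r}(G\bullet m)$ would be circular unless we are careful — instead I would bound the number of such edges directly by $(2r(m-1)+1)\,\mtrdens{r}(G)\cdot|H|$ by charging each inter-fibre edge of $H$ to an edge of a genuine depth-$r$ topological minor of $G$ and accounting for the at-most-$m$ lifts of each of its $\le 2r{+}1$ branch/subdivision vertices.

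Second, the paths of $S$ whose two endpoints project to the \emph{same} vertex $x$ of $G$: such a path, contracted, gives an edge of $H$ both of whose branch vertices lie in the same fibre-class over $x$, and since the path must leave and return within distance $2r$ using only edges of $G\bullet m$, at the ``bottom'' ($r$-neighbourhood) level these edges are controlled by $\mtrdens{0}$ of $G$ — here the blow-up is $K_{m,m}$-like, contributing the $m^2\mtrdens{0}(G)$ term — together with the $m-1$ additive slack coming from the trivial bound $\|K_{m,m}\|/|K_{m,m}|\le m/2 \le m-1$ on a single fibre when $m\ge 2$ (and the inequality is vacuous when $m=1$). Summing the two contributions and dividing by $|H|$ gives \eqref{eq:mlex}.

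The main obstacle I anticipate is purely bookkeeping but genuinely delicate: making the case split on subdivision paths clean enough that no edge of $H$ is counted in both cases, and getting the \emph{coefficients} right — in particular seeing precisely why lifting a depth-$r$ topological minor through $m$ fibres costs a factor $2r(m-1)+1$ rather than $m$ (the point being that a subdivided edge has $\le 2r-1$ internal vertices plus $2$ ends, each blown up $m$-fold, so a connected piece carrying $k$ edges of the minor spans at most $(2r(m-1)+1)k + O(1)$ vertices, and one must absorb the $O(1)$ into the $m-1$ term). I would organise the write-up around a single inequality relating $\|H\|$, $|H|$, and the densities $\mtrdens{r}(G)$, $\mtrdens{0}(G)$, verifying the constants on the extremal configuration where $H$ itself is (a blow-up of) the extremal graph for $\mtrdens{r}(G)$.
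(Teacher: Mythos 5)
Your plan of projecting $S$ onto $G$ via $\pi$ and bounding the number of lifts is in the right spirit, and it is the paper's first move too, but the proposal misses the central technical device and misattributes the three terms on the right side of~\eqref{eq:mlex}. The crux of the difficulty is that when the branches of the subdivision $S\subseteq G\bullet m$ are projected to $G$, distinct branches can collide: a subdivision vertex of one branch may be a twin of a subdivision vertex (or of a principal vertex) of another branch, so $\pi(S)$ is \emph{not} a valid $\leq 2r$-subdivision of anything. Your charging argument (``a connected piece carrying $k$ edges spans at most $(2r(m-1)+1)k+O(1)$ vertices'') implicitly assumes that the projected branches remain internally disjoint, which they need not be, and without constructing an actual member of $G\mshtm r$ there is nothing to compare against $\mtrdens{r}(G)$. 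The paper resolves this by building a \emph{conflict graph} $C$ on the branches of (a cleaned-up) $S$, where two branches conflict whenever some subdivision vertex of one is a twin of a subdivision vertex of the other; $C$ has maximum degree at most $2r(m-1)$ because each branch has at most $2r-1$ internal vertices, each with $m-1$ twins. A proper $(2r(m-1)+1)$-colouring of $C$ then yields a colour class carrying a fraction $1/(2r(m-1)+1)$ of the branches whose projection \emph{is} a genuine $\leq 2r$-subdivision in $G$. That colouring is where the factor $2r(m-1)+1$ comes from, not from a vertex-count as in your sketch.

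The case split you propose is also not the right one. The paper does not split branches by whether their endpoints project to the same or different vertices of $G$; it splits them by whether they are \emph{subdivided} (length $\geq 2$) or \emph{unsubdivided} (length $1$). The $m^2\mtrdens{0}(G)$ term accounts precisely for the unsubdivided branches: these are edges of $S$ between principal vertices, each lying above an edge of $G[\pi(V(H))]$, and above each such edge $G\bullet m$ has at most $m^2$ edges. Your ``same-vertex'' case (branches between twins $(x,i)$, $(x,j)$) has nothing to do with $\mtrdens{0}(G)$; note in fact that twins are never adjacent in $G\bullet m$, so any such branch is automatically subdivided, and these branches correspond to closed walks in $G$, not to edges of a depth-$0$ minor. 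The paper deals with them via a preliminary shortening step (no branch may contain two twins as a subdivision/principal vertex and a subdivision vertex) and then, before the conflict-graph step, by deleting every branch that contains a twin of a principal vertex as a subdivision vertex. Since each of the $|H_0|$ principal vertices has $m-1$ twins, and in a minimal $S$ each subdivision vertex lies on exactly one branch, at most $(m-1)|H_0|$ branches are deleted; this is the genuine origin of the additive $m-1$ term, not the $\|K_{m,m}\|/|K_{m,m}|\leq m/2$ estimate you suggest. In short: right projection, wrong dichotomy, and the conflict-graph colouring step is missing.
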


\begin{lemma}
\label{lem:fratemb}
For every integer $p \geq 2$,  there is a polynomial $N_p$ such that $T_p$ is a subgraph of $G \bullet N_p(\mtrdens{\frac{p-2}{2}}(G))$.
\end{lemma}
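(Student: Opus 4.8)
The plan is to proceed by induction on $p$, proving along the way the auxiliary statement that $\md(\vH_i)\le Q_i(\mtrdens{\frac{i-1}{2}}(G))$ for a polynomial $Q_i$, for every $i$. For $i=1$ this is immediate, since $\md(\vH_1)=\lceil\mtrdens 0(G)\rceil$ by the choice of orientation. For $i\ge 2$ it follows from the case $p=i$ of the lemma together with Lemma~\ref{lem:mlex}: since $H_i\mshtm 0\subseteq T_i\mshtm\frac{i-1}{2}$ we have $\mtrdens 0(H_i)\le\mtrdens{\frac{i-1}{2}}(T_i)$, and $T_i\subseteq G\bullet N_i(\mtrdens{\frac{i-2}{2}}(G))$ then gives $\mtrdens{\frac{i-1}{2}}(T_i)\le\mtrdens{\frac{i-1}{2}}\!\bigl(G\bullet N_i(\mtrdens{\frac{i-2}{2}}(G))\bigr)$, which Lemma~\ref{lem:mlex} bounds by a polynomial in $\mtrdens{\frac{i-1}{2}}(G)$. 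Thus, while proving the case $p$ we may assume that $\md(\vH_1),\dots,\md(\vH_{p-1})$, and hence also $C(\mathfrak f)$ (which involves only $\vH_1,\dots,\vH_{p-1}$), are bounded by polynomials in $\mtrdens{\frac{p-2}{2}}(G)$.

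To prove the case $p$, I would construct an explicit embedding of $T_p$, the $(p-1)$-subdivision of $H_p$, into $G\bullet m$ for an appropriate $m$. Each arc $e\in E_p$ of $H_p$ carries its walk $W(e)=(v_0^e,\dots,v_p^e)$, a walk of length $p$ in $G$ between the two endpoints of $e$. I would map every original vertex $u$ of $H_p$ to the copy $(u,1)$, and the $j$-th internal vertex of the subdivided edge $e$ (for $1\le j\le p-1$) to $(v_j^e,c_j^e)$ for a copy $c_j^e\in\{2,\dots,m\}$ still to be chosen. Since consecutive vertices of $W(e)$ are adjacent in $G$ and, in $G\bullet m$, the clouds $\{x\}\times[m]$ and $\{y\}\times[m]$ over adjacent vertices $x,y$ of $G$ are completely joined, every subdivided path is present in $G\bullet m$ regardless of the choice of the $c_j^e$; the only requirement is injectivity, which amounts to asking that, for each vertex $z$ of $G$, the pairs $(e,j)$ with $1\le j\le p-1$ and $v_j^e=z$ receive pairwise distinct copies. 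Hence it suffices to take $m=1+\max_z|S_z|$ with $S_z=\{(e,j):e\in E_p,\ 1\le j\le p-1,\ v_j^e=z\}$, and everything reduces to bounding $|S_z|$ by a polynomial in $\mtrdens{\frac{p-2}{2}}(G)$.

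The congestion bound comes from the binary-tree structure of the $\kappa$-decomposition. Unfolding $e$ repeatedly through $\kappa$ produces a full binary tree whose leaves are weight-$1$ arcs and whose internal nodes are the arcs $e'\preceq e$ with $w(e')\ge 2$; the sub-walk of $e'$ inside $W(e)$ is split, at an interior position, exactly at the common head ${\rm head}(f')={\rm head}(g')$ of $\kappa(e')=(f',g')$, and since the sub-walks of the nodes occupy laminar intervals these split positions are pairwise distinct, so this yields a bijection between the internal nodes of the tree and the interior positions $1,\dots,p-1$ of $W(e)$. Summing over $e$ and exchanging the order of summation gives
$$|S_z|=\sum_{e'}\bigl|\{e\in E_p:\ e\succeq e'\}\bigr|,$$
where $e'$ ranges over the arcs with $2\le w(e')\le p$ whose two $\kappa$-parts have common head $z$. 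Each summand is at most $C(\mathfrak f)$: choosing any $h\in E_1$ on $W(e')$ we have $e\succeq e'\succeq h$, so the bound follows from Lemma~\ref{lem:din}. The number of such arcs $e'$ is at most $\binom{\sum_{b=1}^{p-1}\md(\vH_b)}{2}$, because $\kappa(e')$ is an unordered pair of distinct arcs with head $z$ and weights in $\{1,\dots,p-1\}$, and distinct $e'$ have distinct $\kappa(e')$. Hence $|S_z|\le C(\mathfrak f)\binom{\sum_{b=1}^{p-1}\md(\vH_b)}{2}$, which by the induction hypothesis is at most $N_p(\mtrdens{\frac{p-2}{2}}(G))$ for a suitable polynomial $N_p$; this closes the induction (the base case $p=2$ being the degenerate instance where each tree has a single internal node).

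The delicate step is the congestion bound, and specifically avoiding the naive estimate that charges the walks $W(e)$ through $z$ to the edges of $G$ at $z$ --- the degree of $z$ in $G$ is not controlled by $\mtrdens 0(G)$. The point is that a vertex can occupy an interior position of $W(e)$ only as a ``turning point'' of the $\kappa$-decomposition, and the turning points at $z$ are governed by the in-degrees of the already-controlled digraphs $\vH_1,\dots,\vH_{p-1}$, not by the raw degree of $z$ in $G$; the binary-tree correspondence is precisely what makes this rigorous. The rest --- the interleaving of the induction with Lemma~\ref{lem:mlex}, and the verification of the laminarity claim --- is routine bookkeeping.
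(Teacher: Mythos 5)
Your overall strategy is correct and genuinely different from the paper's in the key ``congestion'' step. The paper peels off one $\kappa$-layer at a time: an interior vertex of $W(e)$, $e\in E_a$, is either the transfer vertex (at most $\binom{d}{2}$ choices of $e$) or an interior vertex of $W(g)$ or $W(g')$ for $\kappa(e)=(g,g')$ (bounded by the inductive polynomial $Z$ times the number $\le 2d$ of $e$'s using a given $g$). You instead unfold the full binary $\kappa$-tree of each $e$, establish a bijection between its internal nodes and the $p-1$ interior positions of $W(e)$ via the laminarity of the subwalk intervals, and then sum over arcs $e'$ whose $\kappa$-parts have common head $z$, using Lemma~\ref{lem:din} to bound each fibre. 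Your route avoids the inner recursion via $Z$ but invokes Lemma~\ref{lem:din}; both give polynomial bounds. The interleaving of the induction with Lemma~\ref{lem:mlex} matches the paper's.

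There is, however, a genuine gap in the exchange-of-summation step. You write $|S_z|=\sum_{e'}\bigl|\{e\in E_p: e\succeq e'\}\bigr|$, which implicitly assumes that, for a fixed $e$, the map from internal nodes of the unfolded $\kappa$-tree of $e$ to their underlying arcs is injective. It need not be: $\preceq$ is a DAG order, not a tree order, so the same arc $e'$ with $w(e')\ge 2$ can appear at several nodes of the unfolded tree (for instance $\kappa(e)=(f,g)$, $\kappa(f)=(f_1,h)$, $\kappa(g)=(g_1,h)$ with the same $h$ of weight $\ge 2$: the conditions force $f\ne g$ and $f_1\ne g_1$ but do not forbid the shared second component). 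In that case $|S_z|$ is \emph{strictly larger} than $\sum_{e'}\bigl|\{e: e\succeq e'\}\bigr|$, so the displayed equality goes in the wrong direction for your upper bound. The fix is easy and preserves the polynomial conclusion: the unfolded tree of any $e\in E_p$ has exactly $p-1$ internal nodes, so any arc appears with multiplicity at most $p-1$, giving $|S_z|\le (p-1)\sum_{e'}\bigl|\{e\in E_p: e\succeq e'\}\bigr|\le (p-1)\,C(\mathfrak f)\binom{\sum_{b=1}^{p-1}\md(\vH_b)}{2}$. With that correction the argument closes.
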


From these lemmas will then follow the main result of this paper:

\begin{theorem}
\label{thm:ecolor}
For each integer $p$ there exists a polynomial $P_p$ such that
for every multigraph $G$ it holds
$$\left( \mtrdens{\frac{p-1}{2}}(G)\right)^{1/p} \leq  \arb_p(G)  \leq P_p(\mtrdens{\frac{p-1}{2}}(G)).$$

In particular, $\arb_p$ and $\mtrdens{\tfrac{p-1}{2}}$ are two polynomially equivalent multigraph invariants.
\end{theorem}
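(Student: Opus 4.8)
The plan is to treat the two inequalities separately. The lower bound $\left(\mtrdens{\frac{p-1}{2}}(G)\right)^{1/p}\le\arb_p(G)$ is exactly the content of Lemma~\ref{lem:lowbound}, so nothing remains there, and the asserted polynomial equivalence of $\arb_p$ and $\mtrdens{\frac{p-1}{2}}$ will be immediate once both bounds are in hand. For the upper bound the strategy is to reduce, through Lemma~\ref{lem:ecolor0}, to a purely density‑theoretic task: it suffices to produce an orientation $\vG$ of $G$, a fraternal completion $\mathfrak f=((E_1,\dots,E_p),w,\kappa)$ of $\vG$ of depth $p$, and polynomials $Q_1,\dots,Q_p$ with
\[
\md(\vH_i)\le Q_i\!\left(\mtrdens{\tfrac{p-1}{2}}(G)\right)\qquad(1\le i\le p),
\]
for then Lemma~\ref{lem:ecolor0} yields $\arb_p(G)\le P_p(\mtrdens{\frac{p-1}{2}}(G))$ with $P_p$ assembled from $Q_1,\dots,Q_p$ as in that lemma.

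To build $\mathfrak f$ I would proceed greedily through the levels, as sketched before the theorem. By Lemma~\ref{lem:fratobv} the level‑$1$ data is forced ($E_1=E(\vG)$) and any completion extends by one more level; the key point is that once $\vH_1,\dots,\vH_i$ are fixed, the underlying multigraph $H_{i+1}$ of $\vH_{i+1}$ is already determined, and the only remaining freedom is the choice of its orientation. At level $i$ I orient $H_i$ so as to minimise the maximum in‑degree; by the orientation form of Nash–Williams' theorem (Hakimi's theorem, which holds verbatim for multigraphs) this can be done with $\md(\vH_i)=\lceil\mtrdens{0}(H_i)\rceil$, since $\mtrdens{0}(H_i)=\max_{H'\subseteq H_i}\|H'\|/|H'|$. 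So everything comes down to bounding $\mtrdens{0}(H_i)$ by a polynomial in $\mtrdens{\frac{p-1}{2}}(G)$.

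For $i=1$ this is trivial since $H_1=G$. For $2\le i\le p$ I would invoke the two lemmas deferred to Section~\ref{sec:5} together with the monotonicity of the invariant $\mtrdens{r}$ both under passing to subgraphs and in the radius $r$. The construction of $T_i$, the $(i-1)$‑subdivision of $H_i$, uses only $\vH_1,\dots,\vH_i$, that is, $T_i$ is exactly the object the construction with parameter $i$ produces; hence Lemma~\ref{lem:fratemb} applied with $p$ replaced by $i$ gives $T_i\subseteq G\bullet m_i$ with $m_i=N_i(\mtrdens{\frac{i-2}{2}}(G))$. Since every subgraph of $H_i$ lies in $T_i\mshtm(\tfrac{i-1}{2})$ (its $(i-1)$‑subdivision is a subgraph of $T_i$), we obtain
\[
\mtrdens{0}(H_i)\le\mtrdens{\tfrac{i-1}{2}}(T_i)\le\mtrdens{\tfrac{i-1}{2}}(G\bullet m_i),
\]
and Lemma~\ref{lem:mlex} bounds the last quantity by $\bigl((i-1)(m_i-1)+1\bigr)\mtrdens{\frac{i-1}{2}}(G)+m_i^2\,\mtrdens{0}(G)+m_i-1$. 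Finally, for $i\le p$ one has $\mtrdens{0}(G)\le\mtrdens{\frac{i-2}{2}}(G)\le\mtrdens{\frac{i-1}{2}}(G)\le\mtrdens{\frac{p-1}{2}}(G)$, so—using that each $N_i$ may be taken non‑decreasing—replacing all arguments by $\mtrdens{\frac{p-1}{2}}(G)$ exhibits $\mtrdens{0}(H_i)$, hence $\md(\vH_i)$, as at most a fixed polynomial $Q_i(\mtrdens{\frac{p-1}{2}}(G))$. Feeding the $Q_i$ into Lemma~\ref{lem:ecolor0} finishes the proof.

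The real work is not in this assembly—which is bookkeeping with monotonicity—but in the two ingredients deferred to Section~\ref{sec:5}: Lemma~\ref{lem:mlex}, controlling the effect of the vertex blow‑up $G\bullet m$ on shallow‑topological‑minor density, and above all Lemma~\ref{lem:fratemb}, which embeds the subdivided fraternal‑completion layer $T_p$ into a bounded blow‑up of $G$; these are where I expect the main difficulty to lie. The only delicate point in the present argument itself is the application of Lemma~\ref{lem:fratemb} at every level $i\le p$ and not merely at $p$, which is legitimate precisely because the greedy construction is uniform in its parameter, so no circularity arises.
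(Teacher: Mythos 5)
Your proposal is correct and follows the same route as the paper: lower bound from Lemma~\ref{lem:lowbound}, upper bound via Lemma~\ref{lem:ecolor0} after showing $\md(\vH_i)$ is polynomially bounded using the orientation with $\md(\vH_i)=\lceil\mtrdens{0}(H_i)\rceil$, the embedding $T_i\subseteq G\bullet N_i(\mtrdens{\frac{i-2}{2}}(G))$ from Lemma~\ref{lem:fratemb}, and the blow-up bound of Lemma~\ref{lem:mlex}. You simply make explicit the bookkeeping (monotonicity of $\mtrdens{r}$ in $r$ and under subgraphs, and that Lemma~\ref{lem:fratemb} can be invoked at each level $i\le p$ because the construction of $T_i$ depends only on $\vH_1,\dots,\vH_i$) which the paper leaves folded into the proof of Lemma~\ref{lem:fratemb} and the preceding discussion.
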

\begin{proof}
The inequality
$\left( \mtrdens{\frac{p-1}{2}}(G)\right)^{1/p} \leq  \arb_p(G)$
follows from Lemma~\ref{lem:lowbound}.
By the argument above,  Lemma~\ref{lem:mlex} and Lemma~\ref{lem:fratemb} imply that $\md(\vH_i)$ is bounded by
some polynomial function of $\mtrdens{\frac{p-1}{2}}(G)$, and hence, by Lemma~\ref{lem:ecolor0}, there exists
a polynomial $P_p$ such that $\arb_p(G)  \leq P_p(\mtrdens{\frac{p-1}{2}}(G))$.
\end{proof}

\section{Proofs of Lemmas \ref{lem:mlex}  and \ref{lem:fratemb}}
\label{sec:5}

%
%  The proof of Lemma \ref{lem:mlex} follows the lines of the proof of the version presented in \cite{Sparsity}, where a similar result for the grad of a graph was proved.

{\bf Proof of Lemma \ref{lem:mlex}}. 
The vertices of $K\bullet m$ are the pairs $(v,i)$, $v$ a vertex of $G$ and $1\leq i\leq m$.
The every vertex $v$ of $G$, we say that $(v,i)$ and $(v,j)$ are {\em twins} in $G\bullet m$ and we denote by $\pi$ the projection of $G\bullet m$ into $G$ which maps $(v,i)$ to $v$.

Let $S$ be a subgraph of $G \bullet m$ which is $(\leq 2r)$-subdivision of a multigraph $H$ such that
$$
\mtrdens{r}(G\bullet m)=\frac{\|H\|}{|H|}.
$$
Choose $S$ with the minimal number of vertices.

A path of $S$ corresponding to an edge of $H$ is called a {\em branch}. The vertices of $S$ corresponding to vertices of $H$ are called {\em principal vertices}. The other vertices of $S$ are {\em subdivision vertices}.

Let $S_0$ be the graph obtained from $S$ by deleting all the branches which are not subdivided
 and let $H_0$ be the corresponding subgraph of $H$ ($S_0$ is a $(\leq 2r)$-subdivision of $H_0$ and every branch of $S_0$ is a path of length at least $2$).
Then we have $\|H\|\leq \|H_0\|+m^2\|G[\pi(V(H))]\|$ hence
\begin{align*}
\frac{\|H\|}{|H|}&\leq \frac{\|H_0\|}{|H|}+m^2\frac{\|G[\pi(V(H))]\|}{|\pi(V(H))|}.
\intertext{Thus}
\mtrdens{r}(G\bullet m)&\leq \frac{\|H_0\|}{|H_0|}+m^2\mtrdens{0}(G).
\end{align*}
First notice that no branch of $S(H)$ contains two twin vertices, except if the branch is a single edge path linking two twin vertices (otherwise we can shorten the branch without changing $\|H\|$ and $|H|$, see Fig~\ref{fig:lexi0}).

\begin{figure}[h!t]
\includegraphics[width=\textwidth]{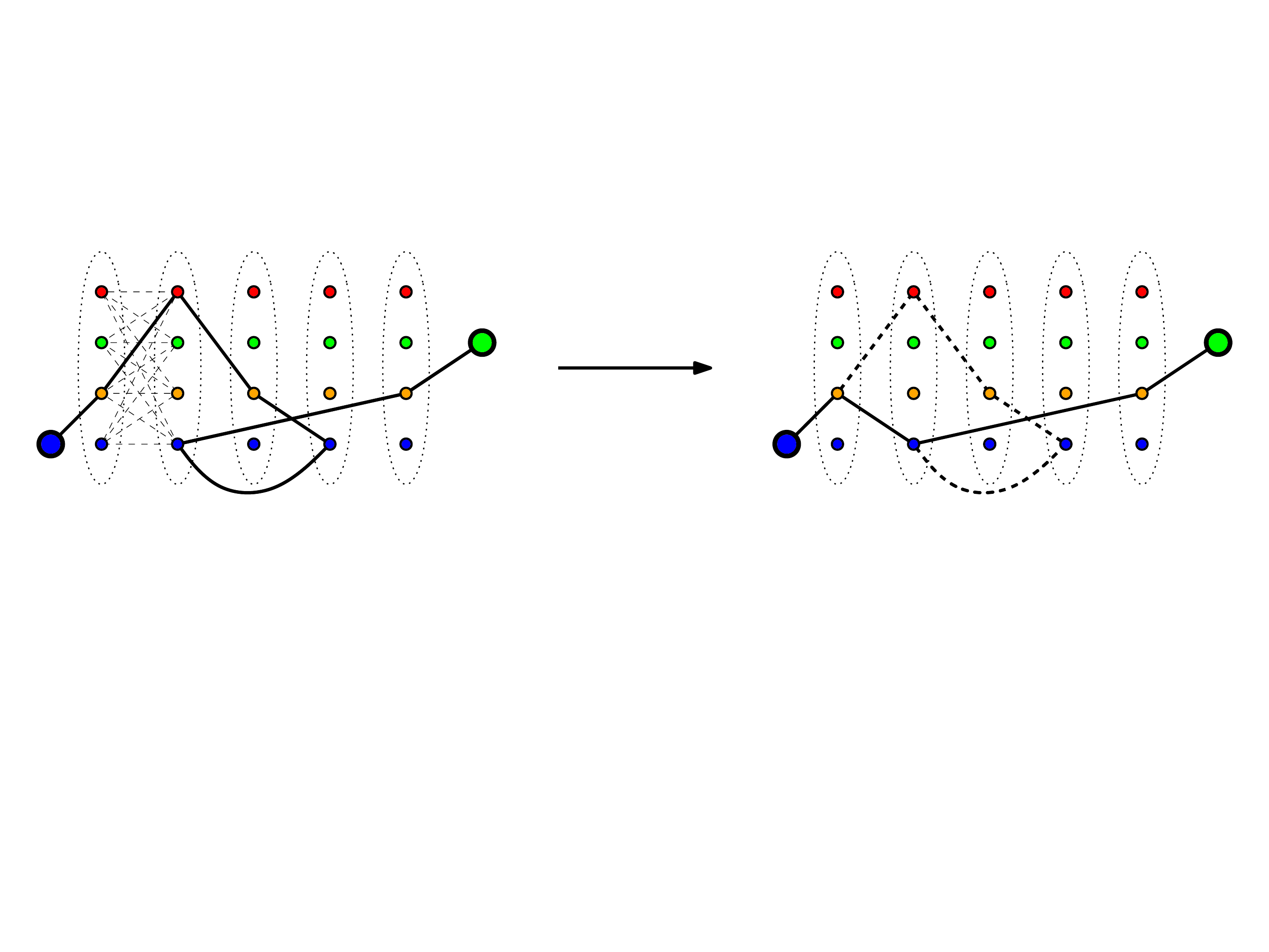}
\caption{If a branch contains twin vertices, we shorten it.}
 \label{fig:lexi0}
\end{figure}

We define the multigraph $H_1$ and its $(\leq 2r)$-subdivision $S_1$ by the following procedure:
Start with $H_1=H_0$ and $S_1=S_0$. Then, for each subdivision vertex $v\in S_1$ having a twin which is a principal vertex of $S_1$, delete the branch of $S_1$ containing $v$ and the corresponding edge of $H_1$.
In this way, we delete at most $(m-1)|H_0|$ edges of $H_1$.  Thus $\frac{\|H_1\|}{|H_1|}\geq\frac{\|H_0\|}{|H_0|}-(m-1)$ and
$S_1$ is such that no subdivision vertex is a twin of a principal vertex.

Given $H_1$ we construct the conflict graph $C$ of $H_1$ as follows: the vertex set of $C$ is the edge set of $H_1$ and the edges of $C$ are the pairs of edges $\{e_1,e_2\}$ such that one of the subdivision vertices of the branch corresponding to $e_1$ is a twin of one of the subdivision vertex of the branch corresponding to $e_2$. Note that graph $C$ has maximum degree at most $2r(m-1)$ hence it is $(2r(m-1)+1)$-colourable.
Let $H_2$ be the subgraph of $H_1$ induced by  a monochromatic set of edges of $H_1$ of size at least $\frac{\|H_1\|}{2r(m-1)+1}$.
So $\frac{\|H_1\|}{|H_1|} \le (2r(m-1)+1)\frac{\|H_2\|}{|H_2|}$.

Let  $S_2$ be the corresponding subgraph of $S_1$.  If $v$ is a principal vertex of $S_2$, then two edges incident to $v$ cannot have their other endpoints equal or twins (because of the colouration).
Let $S_3=\pi(S_2)$ be the projection of $S_2$ on $G$.  Because of the above colouration,
$S_3$ is a ($\le 2r$) subdivision of $H_2$. Hence  $$\mtrdens{r}(G)\geq \frac{\|H_2\|}{|H_2|} \ge \frac{\|H_1\|}{(2r(m-1)+1)|H_1|} \ge \frac{\|H_0\|}{(2r(m-1)+1)|H_0|} -\frac{m-1}{2r(m-1)+1} $$ and the result follows.
\qed

{\bf Proof of Lemma \ref{lem:fratemb}}

Let $\mathfrak f=((E_1,\dots,E_p),w, \kappa)$ be a fraternal
completion of $\vG$ of depth $p$ constructed in such a way that for $i=1,2, \ldots, p$,  $\md(\vH_i) = \mtrdens{0}(H_i)$.

\begin{figure}[h!t]%
\includegraphics[width=\columnwidth]{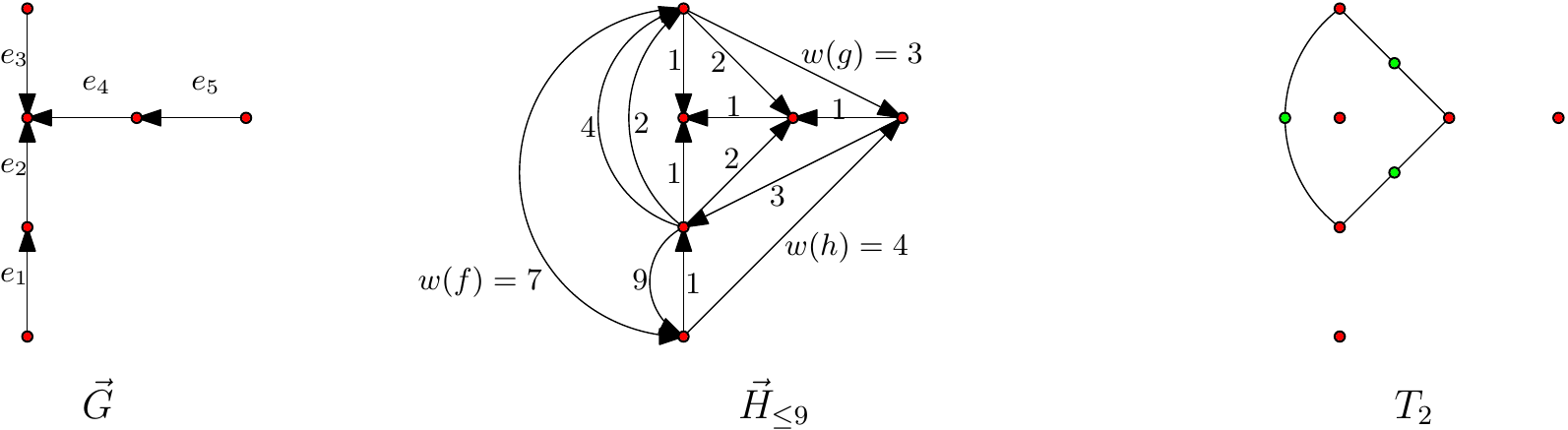}%
\caption{Example of graphs defined by a fraternal completions. On the left, the graph $\vG$.
In the middle, arcs of a fraternal completion of depth $9$ form the multigraph $\vH_{\leq 9}$; here, $\kappa(f)=(g,h)$; the walk
$W(h)=(e_1 e_2 \overline{e}_4 \overline{e}_5)$  associated to $h$ is simple but the walk $W(f)=(e_3 \overline{e}_4 \overline{e}_5 e_5 e_4 \overline{e}_2 \overline{e}_1)$ associated to $f$ is not.
 On the right, the $1$-subdivision of the arcs in $E_2$ define the undirected multigraph $T_2$.}
\label{fig:frat}%
\end{figure}

In the following, we shall prove that for $2 \leq a\leq p$ there is a polynomial $N_a$ such that
the graph $T_a$ can be injectively embedded into a blowing $G \bullet N_a(\mtrdens{\frac{a-2}{2}}(G))$ of $G$.
Observe that if this is true for $a=2, \ldots, i$, then by using Lemma \ref{lem:mlex}, we can conclude that there is a polynomial $P_i$ such that $\mtrdens{0}(H_i) \le P_i(\mtrdens{\frac{i-1}{2}}(G))$.

By definition, for $a \ge 1$, $T_a$ is obtained from the empty graph on $V(G)$ by adding, for each arc $e=xy$ in $E_a$, an induced path of length $a$ connecting $x$ and $y$. Each arc $e=xy$ in $E_a$ corresponds to a walk $W(e)$ in $G$ of length $a$ from $x$ to $y$. By sending the induced $x$-$y$-path of length $a$ in $T_a$ to the corresponding walk in $G$ connecting $x$ and $y$, we obtain a homomorphism, say $f$, from $T_a$ to $G$. However, $f$ is not an embedding, as many vertices of $T_a$ may have the same image. 
%XD
Indeed, $V(T_a)$ is the union of $V(G)$ and a set of $|E(T_a)|(a-1)$ added vertices which are the interior vertices of the walks $W(e)$. If for some integer $m$, each vertex $v$ of $G$ is contained in at most $m-1$ of the walks $W(e)$ as an interior vertex, then $T_a$ embeds into $G \bullet m$, as we can assign to each vertex  in $f^{-1}(v)$ a distinct vertex of $\{v\} \times [m]$ in $G \bullet m$  as its image. Let us consider the 
case $a=2$. By our construction, $\vG$ has $\md(\vG) = \lceil \mtrdens{0}(G)\rceil$ Each arc $e=xy$ in $E_2$ corresponds to a walk of the form $(x,v,y)$, where $(x,v)$ and $(y,v)$ are arcs of $\vG$. Let $d= \lceil \mtrdens{0}(G) \rceil = \md(\vG)$. Then   $v$ is an interior vertex of a walk $W(e)$ for $e=xy$ in $E_2$ if and only if $(x,v)$ and $(y,v)$ are arcs of $\vG$. Hence $v$ is contained in at most ${ d \choose 2}$ walks $W(e)$ as a an interior vertex.  Therefore for $N_2(x) = {x +1 \choose 2}+1$, $T_2$ embeds into $G \bullet N_2(\mtrdens{0}(G))$.

Assume now that the polynomial $N_i$ is defined for $i=2,\ldots, a-1$ and $T_i$ embeds into $G \bullet N_i(\mtrdens{\frac{i-2}{2}}(G))$. 
Each arc $e =xy \in E_a$ corresponds to two arcs $g \in E_i$ and $g' \in E_j$ with $i+j =a$, with $g=xz$ and $g'=yz$ for some $z$. A vertex $v$ is an interior vertex of the walk $W(e)$ if and only if either
$v=z$ or $v$ is an interior vertex of $W(g)$ or $W(g')$. By our definition of the fraternal completion, $v$ has in-degree  at most $\md(\vH_i) = \lceil \mtrdens{0}(H_i) \rceil$ in $\vH_i$. By induction hypothesis and the observation above, $\mtrdens{0}(H_i) \le P_i(\mtrdens{\frac{i-1}{2}}(G))$ for some polynomial $P_i$. This implies that for some polynomial $Q$, each vertex $v$ has in-degree at most $d=Q(\mtrdens{\frac{a-2}{2}}(G))$ in $\vH_{\le (a)}$. So for each vertex $v$ of $G$, there are at most  ${d \choose 2}$ pairs of arcs $g, g'$ for which there is an edge $e=xy$ in $E_a$ with $k(e)=(g,g')$. 

For an edge $ e=xy \in E_a$, we say a vertex $v$ is the {\em transfer vertex} of $W(e)$ if $k(e)=(g,g')$, $g=(x,v)$ and $g'=(y,v)$. An interior vertex of $W(e)$ is either a transfer vertex of $W(e)$ or an interior vertex of $g$ for some $ g \in E_i$ where $i \le a-1$. 

By induction hypothesis, there is a polynomial $Z$ such that a vertex $v$ appears at most $Z(\mtrdens{\frac{a-3}{2}}(G))$ times as an interior vertex of a walk $W(g)$ for $g \in \cup_{i=1}^{a-1}E_i$. For each $g \in E_i$ for some $i \le a-1$, there are at most $2d$ edges $e \in E_a$ such that $k(e)=(g, g')$ or $k(e)=(g', g)$. Therefore, each vertex $v$ appears at most ${d \choose 2} + 2d \times Z(\mtrdens{\frac{a-2}{2}}(G))$ times as an interior vertex of $W(e)$ for $e \in E_a$. As $d=Q(\mtrdens{\frac{a-2}{2}}(G))$ and $\mtrdens{\frac{a-3}{2}}(G) \le \mtrdens{\frac{a-2}{2}}(G)$, with $N_a(x) = Q(x)^2 + 2Q(x)Z(x)$,  each vertex appears at most $N_a(\mtrdens{\frac{a-2}{2}}(G))-1$ times as an interior vertex of $W(e)$ for some $e \in E_a$. Therefore with $m = N_a(\mtrdens{\frac{a-2}{2}}(G))$,   $T_a$ embeds into $G \bullet m$. This completes the proof of Lemma \ref{lem:fratemb}.
\qed

\section{The Dual Version}
\label{sec:6}
The problem addressed in this paper can be considered in the more general context of matroids:
\begin{problem}
Let $\mathcal M$ be a matroid and let $p$ be an integer. What is the minimum  number $\arb_p(\mathcal M)$ needed to colour the elements of $\mathcal M$ in such a way that each circuit $\gamma$ gets at least $\min(|\gamma|,p+1)$ colours?
\end{problem}

It would be interesting to find a natural class of matroids for which $\arb^\star_p(\mathcal M)$ is uniformly bounded.
For graphs this leads to the following problem: 
%For graphs, the dual problem is then:
\begin{problem}
Let $G$ be a graph and let $p$ be an integer. What is the minimum integer $N=\arb^\star_p(G)$ such that the edge set of $G$ may be coloured using $N$-colours in such a way that each cut $\omega$ gets at least $\min(|\omega|,p+1)$ colours?
\end{problem}

It is maybe interesting that the dual version of our problem may present different aspects.
The well known theorem of Erd{\H o}s \cite{ErdH1959} which asserts that there exists a graph of order at least $n$, girth at least $g$ and chromatic number at least $2N+1$. As the chromatic number of a graph is bounded by $\chi(G)\leq 2\arb(G)+1$ we get that there exist graphs with arbitrarily large girth and arboricity (hence arbitrarily large $\arb_p$). 
The notion dual to ``$G$ has girth at least $k$'' (i.e. every cycle of $G$ has length at least $k$) is ``$G$ is $k$-edge connected'' (i.e. every edge cut of $G$ has size at least $k$). 
However, there does not exist graphs with arbitrarily edge-connectivity and $\arb_p^\star$. Precisely:

\begin{proposition}
Let $G$ be a graph and let $p$ be an integer.
\begin{itemize}
    \item If $G$ is $(2p+2)$-edge connected then $\arb^\star_p(G)=p+1$;
    \item if $G$ is $(2p+1)$-edge connected then $\arb^\star_p(G)\leq
    (p+1)(2p+1)$, and there exists infinitely many $(2p+1)$-edge connected graphs
    such that $\arb^\star_p(G)\geq p+2$;
    \item $\arb^\star_p(G)$ is not bounded for $(2p)$-edge connected graphs. 
%    \item for every positive integers $n,g,N$ there exist a graph $G$ of order at least $n$, girth at least $g$ and arboricity (hence $\arb_p$) at least $N$.
\end{itemize}
\end{proposition}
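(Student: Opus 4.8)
The plan is to establish each of the three bullets by combining duality for the arboricity-type quantity with known theorems about spanning connectivity and partitioning edge sets into spanning connected subgraphs (Nash--Williams/Tutte for forests, and the Nash--Williams--Tutte spanning-tree-packing theorem in dual form). The key observation is that a colouring in which every cut $\omega$ receives at least $\min(|\omega|,p+1)$ colours is exactly dual to the problem solved for cycles: if we could partition $E(G)$ into $N$ parts such that the union of any $p$ of them is connected and spanning (equivalently, so that no $p$ colour classes ``miss'' an entire cut, i.e. their complement does not contain a cut), then every cut meets at least $N-(N-p)=p+1$... more carefully, we want every $p$ colours to hit every cut of size $\ge p+1$, which means the complement of any $N-p-1$ colour classes must still be connected and spanning. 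So the right reformulation is: \emph{$\arb^\star_p(G)=N$ iff $E(G)$ can be $N$-coloured so that the union of every $(N-p-1)$-subset of colour classes, removed, leaves a spanning connected subgraph; equivalently the union of any $p+1$ colour classes meets every cut.} We will phrase everything in terms of the dual matroid (cocycle matroid), where cuts become circuits, so that $\arb^\star_p(G)=\arb_p(M^\star(G))$ in the notation of the preceding Problem, and the relevant density parameter is the edge-connectivity of $G$ (the cogirth of $M^\star$).

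For the first bullet, suppose $G$ is $(2p+2)$-edge connected. First, $\arb^\star_p(G)\ge p+1$ trivially since some cut has size $\ge 2p+2>p+1$ and needs $p+1$ colours. For the upper bound I would use a theorem guaranteeing that a $(2p+2)$-edge-connected graph has $p+1$ edge-disjoint spanning connected subgraphs (this follows from the Nash--Williams--Tutte theorem: $k$-edge-connectivity with $k=2t$ implies $t$ edge-disjoint spanning trees, since $2t$-edge-connectivity gives that for every partition into $r$ parts there are at least $t(r-1)$ crossing edges), but I actually need each \emph{colour class} to be connected spanning so that every cut hits all $p+1$ colours. So: take $p+1$ edge-disjoint spanning trees $T_1,\dots,T_{p+1}$, colour the edges of $T_i$ with colour $i$, and distribute the remaining edges arbitrarily among the colours. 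Each colour class contains a spanning tree hence meets every cut, so every cut gets all $p+1$ colours; thus $\arb^\star_p(G)=p+1$.

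For the second bullet, when $G$ is $(2p+1)$-edge connected, $(2p+1)$-edge-connectivity still yields $p$ edge-disjoint spanning trees (since $2p+1>2p$), but possibly not $p+1$. The idea is to take $p$ edge-disjoint spanning trees $T_1,\dots,T_p$ and handle the leftover edges $R=E(G)\setminus(T_1\cup\dots\cup T_p)$ separately: partition the edges into the $p$ trees plus $R$, but then each tree must be ``split'' among several colours to avoid the situation where a small cut (of size exactly $2p+1$) sees a colour only via one tree which it crosses only once --- actually since each $T_i$ is spanning, it meets every cut, so using colours $1,\dots,p$ for the trees already gives $p$ colours on every cut. We need one more colour on every cut $\omega$ with $|\omega|\ge p+1$. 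A cut of size $\ge 2p+2$ is automatically handled (it crosses some tree at least twice, or crosses $R$); the delicate cuts are those of size exactly $2p+1$. For these, $\sum_i |\omega\cap T_i| + |\omega\cap R| = 2p+1$ with each $|\omega\cap T_i|\ge 1$, so $|\omega\cap R|\le p$ and at least $p+1-|\omega\cap R|$ of the trees are crossed exactly once. To get a $(p+1)$st colour on every such cut I would further refine each $T_i$ by a proper edge colouring of an auxiliary graph (recolour edges of $\bigcup T_i$ using $p(2p+1)$ colours total so that within the $(2p+1)$-edge-connectivity each small cut still sees $\ge p+1$ of them) --- this is where the bound $(p+1)(2p+1)$ comes from: $p+1$ "rounds" times $2p+1$. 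I expect this to be the main obstacle: organizing the recolouring of the spanning trees so that every minimum cut picks up the extra colour while staying within $(p+1)(2p+1)$ colours overall, which I would do by a greedy/averaging argument on the incidences of minimum cuts with tree edges, or by invoking a defective-colouring result for the cut structure. For the lower bound $\arb^\star_p(G)\ge p+2$ for infinitely many $(2p+1)$-edge-connected graphs, I would exhibit an explicit family: take a long cycle $C_n$ and replace each edge by $2p+1$ parallel edges (or a suitable $(2p+1)$-edge-connected "necklace"), and argue by a counting/parity obstruction that $p+1$ colours do not suffice --- analogous to how $\arb_1(C_n)=2$ but certain structures force more; concretely a minimum cut of size $2p+1$ around a ``bead'' must receive $p+1$ colours, and chaining these forces a conflict if only $p+1$ colours are available. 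For the third bullet, $\arb^\star_p$ being unbounded on $(2p)$-edge-connected graphs is the dual of the statement (already used in the paper via Erd\H os) that arboricity is unbounded on graphs of large girth: apply the duality $\arb^\star_p(G)=\arb_p(M^\star(G))$ together with the fact that there exist $2p$-edge-connected graphs whose cographic matroid has arbitrarily large $\arb_p$; equivalently, take planar graphs of large girth and large arboricity (which exist by Erd\H os), and their planar duals are $2p$-edge... actually large-girth graphs have small minimum degree, so instead I would take graphs $G$ with $\arb_p(G)$ large and girth $\ge 2p+1$ that are moreover planar, and pass to the planar dual $G^\star$, which is then $(2p+1)$-... hmm --- rather, directly build $2p$-edge-connected graphs with large $\arb^\star_p$ by a construction dual to Lemma~\ref{lem:logtight}, namely the "cograph of the closure of a complete $q$-ary tree", whose cuts play the role of cycles; the $2p$-edge-connectivity comes from the $2p$ rather than $2p+2$ slack. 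The main obstacle across the whole proposition is the sharp constant $(p+1)(2p+1)$ in the second bullet and the explicit lower-bound family; the first and third bullets follow fairly directly from spanning-tree packing and from dualizing the already-established tree-depth/arboricity lower bounds.
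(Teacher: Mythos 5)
Your handling of the first bullet coincides with the paper's: Kundu's theorem gives $p+1$ edge-disjoint spanning trees, each tree gets its own colour, leftover edges are coloured arbitrarily, and since every cut meets every tree it receives all $p+1$ colours. For the other two bullets, however, your proposal contains gaps rather than proofs.

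For the upper bound $(p+1)(2p+1)$ in the second bullet, your plan (take $p$ spanning trees, then ``recolour edges of $\bigcup T_i$'' by ``a greedy/averaging argument'') is never executed, and it is not clear it can produce the stated constant; you flag it yourself as the main obstacle. The paper's device is shorter and you did not find it: double every edge of $G$ to get a $(4p+2)$-edge-connected multigraph $G'$, take $2p+1$ edge-disjoint spanning trees $Y_1,\dots,Y_{2p+1}$ of $G'$ by Kundu, and colour each edge $e$ of $G$ by the set of (at most two) colours assigned to its two copies in $G'$. There are $(2p+1)+\binom{2p+1}{2}=(p+1)(2p+1)$ such labels, and every cut of $G$ picks up all $2p+1$ base colours among labels of size at most two, hence at least $p+1$ distinct labels.

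For the lower bound of the second bullet, your multigraph ``necklace'' is left as a sketch and uses multigraphs unnecessarily; the paper's example is simpler: any non-complete $(2p+1)$-regular $(2p+1)$-edge-connected graph $G$ on more than $2p+2$ vertices works. If $\arb^\star_p(G)=p+1$ then every cut (of size $\ge 2p+1\ge p+1$) must see all $p+1$ colours, so each colour class is connected and spanning, forcing $\|G\|\ge(p+1)(|G|-1)$, which contradicts $\|G\|=(2p+1)|G|/2$.

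For the third bullet, your first route is a dead end: a planar graph of girth at least $g$ has at most $\tfrac{g}{g-2}(n-2)$ edges, so planar graphs cannot simultaneously have large girth and large arboricity, and dualising such a family will not yield the required examples. Your fallback (``a construction dual to Lemma~\ref{lem:logtight}'') is never instantiated. The paper's example is explicit: $C_L^{(p)}$, the $L$-cycle with every edge replaced by $p$ parallel edges, is exactly $2p$-edge-connected, and any valid colouring must assign distinct $\le p$-element colour sets to the $L$ parallel bundles (otherwise the $2p$-edge cut through two identically-labelled bundles would receive at most $p$ colours), so with $N$ colours one needs $N^p\ge L$, i.e.\ $\arb^\star_p(C_L^{(p)})\ge L^{1/p}$.
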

\begin{proof}
The first item is a consequence of \cite{K74} where it is proved that a $2n$-edge connected graph has at least $n$ pairwise
edge-disjoint spanning trees. It follows that if $G$ is $(2p+2)$-edge connected, it has at least $p+1$ edge-disjoint spanning trees $Y_1,\dots,Y_{p+1}$. Colour $i$ the edges of $Y_i$ and further colour $1$ the edges which are present in none of the $Y_i$'s. As each $Y_i$ is spanning, each cut meets all the $Y_i$'s thus gets $p+1$ colours. It follows that $\arb^\star_p(G)=p+1$.

The upper bound of the second item is similarly obtained by doubling each edge of $G$ (thus obtaining a $(4p+2)$-edge connected multigraph) and considering $2p+1$ edge-disjoint spanning trees of this new multigraph $G'$, and colouring each edge $e$ of $G$ by the set of (at most two) colours assigned to the two edges of $G'$ corresponding to $e$. The lower bound is obtained by considering non-complete $(2p+1)$-regular $(2p+1)$-edge connected graphs $G$: if $\arb_p(G)=p+1$ would hold then each colour class would include a spanning tree and hence $\|G\|\geq (p+1)(|G|-1)$ would hold.

The last item follows from the following construction. For integers $L,p$ let $C_L^{(p)}$ be the multigraph obtained from a cycle of length $L$ by replacing each edge by $p$ parallel edges (see Fig.~\ref{fig:mcycle}). The graph $C_L^{(p)}$ is $(2p)$-edge connected. However, $\arb_p(C_L^{(p)})\geq L^{1/p}$ as if each cut gets at least $p+1$ colours then
no two group of parallel edges can be coloured by the same set of colours.

%The third item is a consequence of the well known theorem of Erd{\H o}s \cite{ErdH1959} which asserts that there exists a graph of order at least $n$, girth at least $g$ and chromatic number at least $2N+1$. As the chromatic number of a graph is bounded by $\chi(G)\leq 2\arb(G)+1$, we conclude.
\end{proof}

\begin{figure}[h]
	\centering
		\includegraphics[width=.3\textwidth]{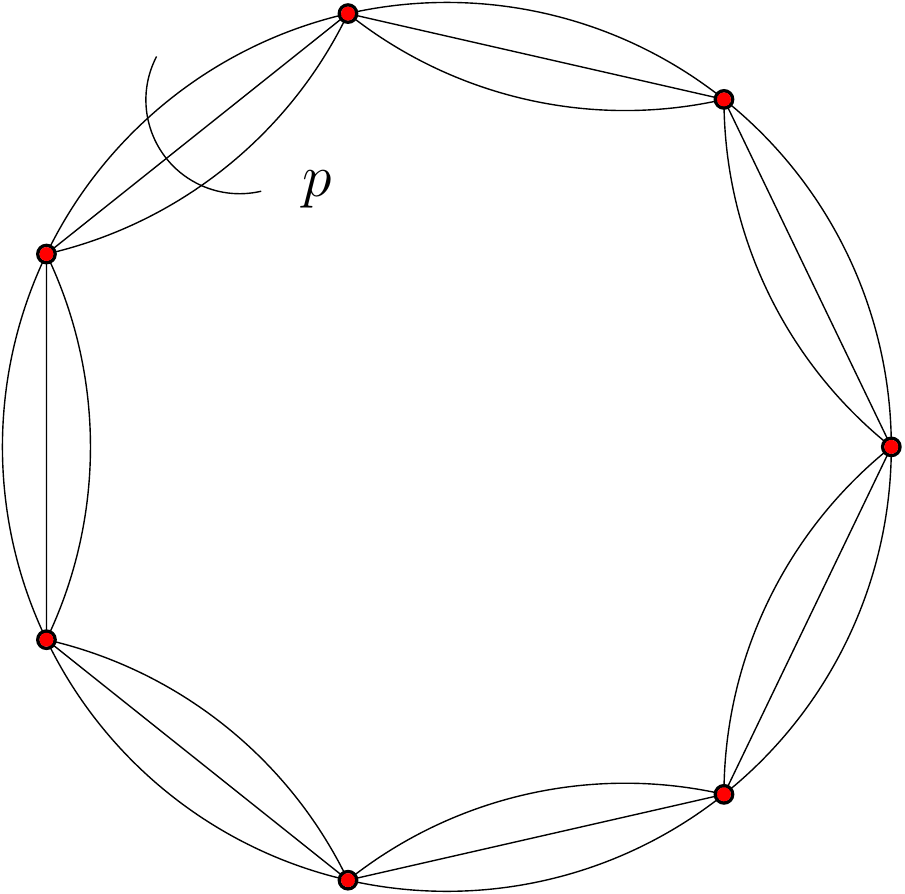}
	\caption{The multigraph $C_L^{(p)}$ is $(2p)$-edge connected and $\arb_p(C_L^{(p)})\geq L^{1/p}$}
	\label{fig:mcycle}
\end{figure}

\section*{References}
%\bibliographystyle{amsplain}
%\bibliography{bib,biblio}
\providecommand{\noopsort}[1]{}\providecommand{\noopsort}[1]{}
\providecommand{\bysame}{\leavevmode\hbox to3em{\hrulefill}\thinspace}
\providecommand{\MR}{\relax\ifhmode\unskip\space\fi MR }
% \MRhref is called by the amsart/book/proc definition of \MR.
\providecommand{\MRhref}[2]{%
  \href{http://www.ams.org/mathscinet-getitem?mr=#1}{#2}
}
\providecommand{\href}[2]{#2}

\end{document}